\setlist[enumerate]{leftmargin=.5in}
\setlist[itemize]{leftmargin=.5in}
\crefname{hypothesis}{Hypothesis}{Hypotheses}
\title{On the iterative solution of systems of the form $A^\top A\lowercase{x}=A^\top\lowercase{b}+\lowercase{c}$\thanks{Submitted to the editors: Draft: Version of \today.
\funding{This work was funded by TOTAL.}}}
\author{Henri Calandra\thanks{TOTAL, Centre Scientifique et Technique Jean F\'eger, avenue de Larribau F-64000 Pau, France 
  (\email{henri.calandra@total.com}).}
\and Serge Gratton\thanks{INPT-IRIT, University of Toulouse and ENSEEIHT, 2 Rue Camichel, BP~7122, F-31071 Toulouse Cedex~7, France
  (\email{serge.gratton@enseeiht.fr}, \email{elisa.riccietti@enseeiht.fr}).}
\and Elisa Riccietti\footnotemark[3]
\and Xavier Vasseur\thanks{ISAE-SUPAERO, University of Toulouse, 10, avenue Edouard Belin, BP~ 54032, F-31055 Toulouse Cedex 4, France
	(\email{xavier.vasseur@isae-supaero.fr}).}
}
\newcommand{\Ae}{A_\epsilon}
\newcommand{\be}{b_\epsilon}
\newcommand{\tr}{\tilde{r}}
\newcommand{\tx}{\tilde{x}}
\newcommand{\ata}{(A^TA)^{-1}}
\newcommand{\apa}{A^\dagger(A^\dagger)^T}
\newcommand{\LM}{Levenberg-Marquardt }
\newcommand{\xe}{x_\epsilon}
\begin{document}

\maketitle

\begin{abstract}
  Given a full column rank matrix $A \in \mathbb{R}^{m\times n}$ ($m\geq n$), we consider a special class of linear systems of the form $A^\top Ax=A^\top b+c$ with $x, c \in \mathbb{R}^{n}$ and $b \in \mathbb{R}^{m}$.  The occurrence of $c$ in the right-hand side of the equation prevents the direct application of standard methods for least squares problems. Hence, we investigate alternative solution methods that, as in the case of normal equations, take advantage of the peculiar structure of the system to avoid unstable computations, such as forming $A^\top A$ explicitly. We propose two iterative methods that are based on specific reformulations of the problem and we provide explicit closed formulas for the structured condition number related to each problem. These formula allow us to compute a more accurate estimate of the forward error than the standard one used for generic linear systems, that does not take into account the structure of the perturbations. The relevance of our estimates is shown on a set of synthetic test problems. Numerical experiments highlight both the increased robustness and accuracy of the proposed methods compared to the standard conjugate gradient method. It is also found that the new methods can compare to standard direct methods in terms of solution accuracy.
\end{abstract}

\begin{keywords}
  Linear systems, Conjugate Gradient Method, Forward Error, Least Squares Problems.
\end{keywords}

\begin{AMS}
  15A06, 65F10, 65F35, 65G50.
\end{AMS}

\section{Introduction}
Given $A\in \mathbb{R}^{m\times n}$, $m\geq n$ with $\mathrm{rank}(A)=n$, $b\in \mathbb{R}^{m}$ and $x, c\in\mathbb{R}^n$, we consider the following linear system of equations:
\begin{equation}\label{base}
A^\top Ax=A^\top b+c,
\end{equation}
or, equivalently, the following minimization problem:
\begin{equation}\label{qls}
\min_{x \in \mathbb{R}^n} \frac{1}{2}\|Ax-b\|^2-c^\top x.
\end{equation} 
At first sight, relation \cref{base} evokes a least squares problem in the normal equations form:
\begin{equation}\label{normal_eq}
A^\top Ax=A^\top b,
\end{equation}
but the situation is fundamentally different, as the vector $c$ is present in the right-hand side. Contrarily to least squares problems, a very well-studied topic (\cite{bjorck1996numerical}\cite[Ch.5]{vanloan}\cite[Ch.10]{nw}), problem \cref{base} has not been object of much study in the literature. It can be seen as a reformulation of a particular instance of a KKT system \cite{chen2012structured,Duff,gill1993solving,sun1999structured}:
\begin{equation}\label{augmented}
\begin{bmatrix}
H & A\\
A^\top& 0
\end{bmatrix}
\begin{bmatrix}
r\\x
\end{bmatrix}=\begin{bmatrix}
b\\-c
\end{bmatrix}
\end{equation} 
where $H\in\mathbb{R}^{m\times m}$ is a symmetric matrix, in our case  $H$ is the identity matrix. We mention \cite{bjor92} where the author studies the numerical solution of this augmented system by Gaussian elimination and a proper scaling. 

The solution of problem \cref{base} is however required in  various applications in optimization, such as multilevel Levenberg-Marquardt methods \cite{multilevel_ann}, or in certain formulations based on penalty function approaches \cite[\S 7.2]{fletcher1973,orban}, that we describe in  \cref{sec_motiv}.
Motivated by the important applications these problems arise from, we believe that it is worth getting more insights on both theoretical and practical aspects of the solution of \cref{base}, in particular on its  solution by iterative methods. 

Due to the similarity with the normal equations, we expect to encounter, both in the theoretical study of \cref{base} and in the numerical solution, issues similar to those reported in the literature on normal equations. 

First of all, the system matrix is the same. It is well known that the product $A^\top A$ should not be explicitly formed, because the accuracy attainable by methods for the solution of the normal equations may be much lower than that attainable by a backward stable method for least squares. In this case indeed, the best forward error bound for the normal equations can be obtained by the classical sensitivity analysis of linear systems. It is then of the order of  $\kappa^2(A)~u$, with $u$ the machine precision and $\kappa(A)=\|A\|\|A^\dagger\|$ the condition number of $A$ in the Euclidean norm \cite{higham2002accuracy,rice}. This is an underwhelming result, as from Wedin's theorem (\cite[Theorem 20.1]{higham2002accuracy}) the sensitivity of a least squares problem is measured by $\kappa^2(A)$ only when the residual is large, and it is measured by $\kappa(A)$ otherwise. 

However, practical solution methods do not form the product $A^\top A$, and the following key observation is rather exploited.   The peculiar structure of the normal equations allows to write:
\begin{equation*}
A^\top Ax-A^\top b=A^\top(Ax-b), 
\end{equation*} 
that makes it possible to either employ a factorization of $A$ rather than of $A^\top A$ (in the case of direct methods) or to perform matrix-vector multiplications of the form $A^\top x$ and $Ax$ rather than $A^\top Ax$ (in the case of iterative methods).
This fact makes the standard conditioning analysis of linear systems unsuitable to predict the error for such methods, as the linear system  is not subject to normwise perturbations on the matrix $A^\top A$, but rather to structured perturbations, such as perturbations in the matrix $A$ only. A structured conditioning analysis is then more relevant and indeed leads to the same conclusions as for least squares problems \cite{gratton1996condition}. 

It is therefore possible to devise stable implementations of methods for normal equations. This has been deeply investigated in the literature, see, e.g., \cite{bjorck1998stability,vanloan,paige1982lsqr}. In \cite{bjorck1998stability} the specialized implementation of the Conjugate Gradient (CG) method for the normal equations (CGLS) has been proposed and shown to be more stable than CG applied directly to \cref{normal_eq}. 


In general, however, not all the considerations made for normal equations directly apply to \cref{base}, since the right-hand side in \cref{base} is $A^\top b+c$ instead of $A^\top b$. This difference has important consequences both on theoretical and practical sides. 

On one hand, this results in a different mapping for the condition number and a different set of admissible perturbations for the backward error, as \cite{Fraysse2000}. Consequently, the existing perturbation theory for least squares problems \cite{arioli,gratton1996condition,walden} does not apply here. A proper analysis of the structured condition number of the problem should then be developed.

From a practical perspective, even though the system matrix is the same, a fundamental difference appears: $A$ cannot be directly factorized on the right-hand side and standard methods for the normal equations cannot be directly applied. Successful algorithmic procedures used for normal equations could however be adapted to design stable solution methods, tailored for the specific problem that we consider. 

\paragraph{Contributions} Inspired by the existing results on least squares problems, we propose two different reformulations of \cref{base}, such that it is possible to factorize the matrix $A$ on both the left and right-hand sides, and two corresponding solution methods. We provide structured condition numbers for \cref{base} and for the proposed reformulations. These allow us to compute  first order estimates of the forward error on the solution computed by the proposed methods.  We also report on the numerical performance of such methods on a relevant set of test problems, considering also direct methods to provide a fair comparison. Extensive numerical experiments confirm an improved stability of the proposed methods compared to CG.  The estimates of the forward error are also validated numerically and shown to be sharper upper bounds than the standard bounds  issued from the theory of linear systems.



\paragraph{Structure} The manuscript is organized as follows. In  \cref{sec_motiv} we present two applications arising in optimization where the solution of \cref{base} is required. In  \cref{sec_teo}, we report on the conditioning of the problem and backward error analysis. In \cref{sec_error_analysis}, we discuss issues related to the solution of system of the form \cref{base} in finite arithmetic. We remind  specific algorithmic procedures successful for least squares problems (case $c=0$) that we later adapt to design stable methods for the solution of \cref{base} (more general case $c\neq 0$).  Then, we introduce and study the new iterative solution methods in  \cref{sec_methods}. In  \cref{sec_forward} we propose first order estimates of the forward error on the solution computed by the proposed methods. Extensive numerical experiments are proposed in  \cref{sec_num}. Conclusions are drawn in  \cref{sec:conc}.

\paragraph{Notation}\label{sec_not}

Given a matrix $A\in\mathbb{R}^{m\times n}$, we denote by $A^\dagger$ its pseudoinverse, by $\kappa(A)=\|A\|\|A^\dagger\|$ the condition number of $A$ ($\|\cdot\|$ being the Euclidean norm) and by $\sigma_{\min}(A),\sigma_{\max}(A)$ the smallest and largest singular values of $A$, respectively. 
The Frobenius norm of $A$ is defined as 
\begin{equation*}
\|A\|_F:=\left(\sum_{i,j}\lvert A_{i,j}\rvert^2\right)^{1/2}=tr(A^\top A)^{1/2}.  
\end{equation*}
Moreover, given  $b \in \mathbb{R}^{m}, c \in \mathbb{R}^{n}$, we define $\|[A,  b,  c]\|_F:=\sqrt{\|A\|_F^2+\|b\|^2+\|c\|^2}$.  
We later denote by $I_n\in\mathbb{R}^{n\times n}$ the identity matrix of order $n$, $\otimes$ the Kronecker product of two matrices and by $\mathrm{vec}$ the operator that stacks the columns of a matrix into a vector of appropriate 
dimension \cite{gratton1996condition}. Given $\mathcal{X}$ and $\mathcal{Y}$ two normed subspaces equipped with norms $\|\cdot\|_{\mathcal{X}}$ and $\|\cdot\|_{\mathcal{Y}}$, respectively, we denote by $\|\cdot\|_{\mathrm{op}}$ the operator norm induced by the choice of $\|\cdot\|_{\mathcal{X}}$ and $\|\cdot\|_{\mathcal{Y}}$. Finally, given $a \in \mathbb{R}$, we denote by 
$\mathrm{fl}(a)$ its floating point representation, i.e. $\mathrm{fl}(a)$ is such that
$\lvert a-\mathrm{fl}(a)\rvert \leq \lvert a\rvert\, u $, where $u$ represents the machine precision. 

\section{Two motivating applications}\label{sec_motiv}
In this section, we describe two different applications in which problems of the form \cref{qls} arise, motivating our interest in their solution.  

\subsection{Fletcher's exact penalty function approach}
The first applicative context arises in equality constrained minimization. Consider a problem of the form:
\begin{align*}
\min_x f(x)\\
\text{ s.t. }g(x)=0,
\end{align*}
for twice differentiable functions $f:\mathbb{R}^n\rightarrow \mathbb{R}$ and $g:\mathbb{R}^n\rightarrow \mathbb{R}^m$, respectively. The solution of systems of the form \cref{base} is needed for the evaluation of the value and gradient of the following penalty function \cite[\S 7.2]{fletcher1973,orban}:
\begin{equation*}
\Phi_\sigma(x)=f(x)-g(x)^\top y_\sigma(x),
\end{equation*}
where $y_\sigma(x)\in\mathbb{R}^m$ is defined as the solution of the following minimization  problem:
\begin{equation*}
\min_y \|A(x)^\top y -\nabla f(x)\|^2+\sigma g(x)^\top y,
\end{equation*}
with $A(x)$ the Jacobian matrix of $g(x)$ at $x$ and $\sigma>0$,  a given real-valued penalty parameter. 

\subsection{Multilevel Levenberg-Marquardt method}

The second scenario, in which the solution of system \cref{base} is required, arises in the setting of multilevel Levenberg-Marquardt methods, which are specific members of the family of multilevel optimization methods recently introduced in Calandra et al \cite{multilevel_ann} and further analysed in Calandra et al \cite{high_order}.  The multilevel \LM method is intended to solve nonlinear least squares problems of the form: 
\begin{equation*}
\min_x f(x)=\frac{1}{2}\|F(x)\|^2,
\end{equation*}
with $F:\mathbb{R}^n\rightarrow\mathbb{R}^m$, a twice continuously differentiable 
function. In the two-level setting, the multilevel method allows to choose between two different models to compute the step at each iteration: the classical Taylor model 
or a cheaper model $m_k^H$, built starting from a given approximation $f^H(x^H)=\frac{1}{2}\|F^H(x^H)\|^2$ to the objective function:
\begin{equation*}
m_k^H(x_k^H,s^H)=\frac{1}{2} \|J^H(x_k^H)s^H+F^H(x_k^H)\|^2+\frac{\lambda_k}{2}\|s^H\|^2+( R \nabla f(x_k)-\nabla f^H(x_0^H))^\top s^H,
\end{equation*}
with $J^H(x_k^H)$ the Jacobian matrix of $F^H$ at $x_k^H$, $\lambda_k>0$ a real-valued regularization parameter, $R$ a full-rank linear restriction operator and $x_0^H=Rx_k$, $x_k$ denoting the current iterate at fine level. 

While minimizing the Taylor model amounts to solve a formulation based on the normal equations,  minimizing $m_k^H$ requires the solution of a problem of the form \cref{qls}, due to the addition of the correction term $( R \nabla f(x_k)-\nabla f^H(x_0^H))^\top s^H$, that is needed to ensure coherence between levels. Approximate minimization of the model is sufficient to guarantee convergence of the  method. Thus if the dimension of the coarse problem is not small, an iterative method is then particularly well suited for the minimization of the model.

\section{Conditioning of the problem and backward error analysis}\label{sec_teo}
In this section we first study the conditioning of the problem \cref{base}.  We first propose an explicit formula for the structured condition number, that is useful to compute a first order estimate of the forward error. We also propose theoretical results related to the backward error.

\subsection{Conditioning of the problem}\label{sec_cond}
We study the conditioning of problem \cref{base}, i.e. the sensitivity of the solution $x$ to perturbations in the data  $A, b, c$. We give an explicit formula for the structured condition number, for perturbations on all $A$, $b$ and $c$.  In the following, we define the \textit{condition number} of a function, see \cite{rice}.
\begin{definition}\label{def_cond}
	If $F$ is a continuously differentiable function 
	\begin{align*}
	F: \,&\mathcal{X}\rightarrow\mathcal{Y}\\
	x&\longmapsto F(x),
	\end{align*}
	the absolute condition number of $F$ at $x$ is the scalar $\|F'(x)\|_{\mathrm{op}}$.
	The relative condition number of $F$ at $x$ is 
	\begin{equation*}
	\frac{\|F'(x)\|_{\mathrm{op}}\,\|x\|_{\mathcal{X}}}{\|F(x)\|_{\mathcal{Y}}}.
	\end{equation*}
\end{definition}

\noindent We consider $F$ as the function that maps $A, b, c$ to the solution $x$ of \cref{base}:
\begin{align*}
F:\, & \mathbb{R}^{m\times n}\times \mathbb{R}^m\times \mathbb{R}^n\rightarrow\mathbb{R}^n\\
(A,b,c)\longmapsto& F(A, b, c)=A^\dagger b+A^\dagger(A^\dagger)^\top c.
\end{align*}
Let us define the true residual $r=b-Ax$. As in \cite{gratton1996condition}, we choose the Euclidean norm for the solution and the Frobenius norm (defined in the \textit{Notation} part of Section 1) for the data.

\vskip 5pt
In this section we mark perturbed quantities by a tilde.  Let the $m\times n$ matrix $A$ be perturbed to $\tilde{A}=A+\delta A$, the $m$-dimensional vector $b$ to $\tilde{b}=b+\delta b$, the $n$-dimensional vector $c$ to $\tilde{c}=c+\delta c$. $A^\top A$ is then perturbed to 
\begin{equation}\label{pert_ata}
\tilde{A}^\top\tilde{A}=(A+\delta A)^\top(A+\delta A)=A^\top A+\delta(A^\top A),\quad \delta(A^\top A)=A^\top\delta A+(\delta A)^\top A,
\end{equation}
neglecting the second order terms. 
The solution $x=(A^\top A)^{-1}(A^\top b+c)$ is then perturbed to $\tilde{x}=x+\delta x=(\tilde{A}^\top\tilde{A})^{-1}(\tilde{A}^\top\tilde{b}+\tilde{c})$.
Then, $\tilde{x}$ solves:
\begin{align*}
(A^\top A+\delta(A^\top A))\tilde{x}=(A^\top b+(\delta A)^\top b+A^\top\delta b+c+\delta c).
\end{align*}
Reminding \cref{pert_ata}, and that, since $A$ is full column rank, $A^\dagger=(A^\top A)^{-1}A^\top$ and  $(A^\top A)^{-1}=A^\dagger(A^\dagger)^\top$, we have
\begin{equation*}
\delta x=(A^\top A)^{-1}(\delta A)^\top r-A^\dagger\delta A x+A^\dagger\delta b+A^\dagger(A^\dagger)^\top\delta c.
\end{equation*}
From this, we conclude that 
\begin{equation*}
F'(A,b,c)(E,f,g)=(A^\top A)^{-1}E^\top r-A^\dagger E x+A^\dagger f+A^\dagger(A^\dagger)^\top g,
\end{equation*}
for all $E\in\mathbb{R}^{m\times n}, f\in\mathbb{R}^m,g\in\mathbb{R}^n$.
We then deduce the following property:
\begin{lemma} \label{lemma_cond}
	The conditioning of the problem \cref{base}, with Euclidean norm on the solution and  Frobenius norm on the data, is given by
	\begin{equation}
	\|F'(A,b,c)\|_{\mathrm{op}}=\|[(r^\top\otimes \ata )L_T+x^\top\otimes A^\dagger,A^\dagger,\ata]\|,
	\end{equation}
	where  $L_T$ is the linear operator such that $\mathrm{vec}(A^\top)=L_T\mathrm{vec}(A)$ ($L_T$ is a permutation matrix consisting of ones in the positions $(n(k-1)+l,m(l-1)+k)$ with $l=1,\dots,n$ and $k=1,\dots,m$ and of zeros elsewhere \cite{Fraysse2000,gratton1996condition}). 
\end{lemma}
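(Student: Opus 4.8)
The plan is to turn the linear map $F'(A,b,c)$ into left–multiplication by a single matrix via the $\mathrm{vec}$/Kronecker calculus, and then to recognise that the operator norm requested in \cref{def_cond} is nothing but the spectral $2$-norm of that matrix. Note first that $F$ is continuously differentiable on the open set where $A$ has full column rank (it is rational in the entries of $A$, since $A^\dagger=(A^\top A)^{-1}A^\top$, and affine in $b,c$), so \cref{def_cond} indeed applies, and we have the explicit derivative already computed above,
\begin{equation*}
F'(A,b,c)(E,f,g)=(A^\top A)^{-1}E^\top r-A^\dagger E x+A^\dagger f+A^\dagger(A^\dagger)^\top g ,
\end{equation*}
which is linear in $(E,f,g)\in\mathbb{R}^{m\times n}\times\mathbb{R}^m\times\mathbb{R}^n$.

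The first step is the vectorization. Using $\mathrm{vec}(PXQ)=(Q^\top\otimes P)\mathrm{vec}(X)$ together with $\mathrm{vec}(E^\top)=L_T\mathrm{vec}(E)$, the two $E$-dependent terms become, treating $E^\top$ as the matrix variable in the first one, $(A^\top A)^{-1}E^\top r=\big[(r^\top\otimes(A^\top A)^{-1})L_T\big]\mathrm{vec}(E)$ and $A^\dagger E x=(x^\top\otimes A^\dagger)\mathrm{vec}(E)$, while the last two terms are already of the form $A^\dagger f$ and $A^\dagger(A^\dagger)^\top g=(A^\top A)^{-1}g$. Writing the stacked perturbation $\xi:=\big(\mathrm{vec}(E)^\top,\ f^\top,\ g^\top\big)^\top\in\mathbb{R}^{mn+m+n}$ and collecting the four contributions, one obtains $F'(A,b,c)(E,f,g)=M\xi$ with
\begin{equation*}
M=\big[\,(r^\top\otimes(A^\top A)^{-1})L_T+x^\top\otimes A^\dagger,\ A^\dagger,\ (A^\top A)^{-1}\,\big],
\end{equation*}
which is exactly the matrix appearing inside the norm in the statement.

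The second step identifies the operator norm. By definition of the Frobenius norm, the norm chosen on the data space satisfies $\|[E,f,g]\|_F^2=\|E\|_F^2+\|f\|^2+\|g\|^2=\|\mathrm{vec}(E)\|^2+\|f\|^2+\|g\|^2=\|\xi\|^2$, i.e.\ it is precisely the Euclidean norm of $\xi$, and the solution space $\mathbb{R}^n$ carries the Euclidean norm as well. Hence $\|F'(A,b,c)\|_{\mathrm{op}}=\sup_{\xi\neq 0}\|M\xi\|/\|\xi\|$ is the largest singular value of $M$, that is $\|M\|$, which is the claimed identity. The computation is elementary once the Kronecker identities are applied carefully; the only delicate point — and the main place where an error could creep in — is the bookkeeping in the first term, where $(A^\top A)^{-1}E^\top r$ is linear in $E^\top$ rather than in $E$, so that $L_T$ must be inserted in the right place and the order of the Kronecker factors tracked so that $r^\top$ ends up on the correct side of $(A^\top A)^{-1}$. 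Since the statement is an exact identity, no estimates are involved beyond the linearity of the derivative itself, so I do not expect a genuine obstacle.
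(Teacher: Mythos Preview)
Your approach is exactly the one the paper has in mind: the paper states the lemma as an immediate consequence of the derivative formula, and the missing steps are precisely the $\mathrm{vec}$/Kronecker rewriting and the identification of the Frobenius norm on $(E,f,g)$ with the Euclidean norm of the stacked vector $\xi$, both of which you carry out cleanly.

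One bookkeeping slip, though, at the very spot you yourself flag as ``delicate'': the derivative has a \emph{minus} in front of $A^\dagger Ex$, so after vectorizing you should get $F'(A,b,c)(E,f,g)=M\xi$ with first block $(r^\top\otimes(A^\top A)^{-1})L_T\,-\,x^\top\otimes A^\dagger$, not $+$. You silently turned the minus into a plus when ``collecting the four contributions''. Since this sign sits \emph{inside} a block (not on a whole block), it is not absorbed by the supremum and the two matrices do not have the same spectral norm in general. The statement of the lemma in the paper carries the same $+$, which appears to be a typo there as well; the computation in the subsequent theorem is done with the correct minus sign (hence the $-2\,\mathrm{sym}(B)$ term in $\bar M$). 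So your argument is right, but the matrix you land on should read $[(r^\top\otimes(A^\top A)^{-1})L_T - x^\top\otimes A^\dagger,\ A^\dagger,\ (A^\top A)^{-1}]$.
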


In the following, we give an explicit and computable formula for the structured condition number. 

\begin{theorem}\label{teo_conditioning}
	The absolute condition number of problem \cref{base}, with Euclidean norm on the solution and  Frobenius norm on the data, is $\sqrt{\rule[0mm]{0mm}{3.15mm}\|\bar{M}\|}$, with $\bar{M} \in \mathbb{R}^{n \times n}$ given by
	\begin{equation}\label{conditioning}
	\bar{M}=(1+\|r\|^2)(A^\top A)^{-2}+(1+\|x\|^2)(A^\top A)^{-1}-2~\mathrm{sym}(B),
	\end{equation}
	with $B=A^\dagger rx^\top\ata$, $\mathrm{sym}(B)=\frac{1}{2}(B+B^\top)$ and $x$ the exact solution of \cref{base}. 
\end{theorem}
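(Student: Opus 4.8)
\emph{Proof plan.} The plan is to start from the closed form for the Fr\'echet derivative $F'(A,b,c)$ used in Lemma~\ref{lemma_cond} and to reduce the operator norm to the spectral norm of a small $n\times n$ matrix. First I would identify the data space $\mathbb{R}^{m\times n}\times\mathbb{R}^m\times\mathbb{R}^n$, equipped with the norm $\|[E,f,g]\|=(\|E\|_F^2+\|f\|^2+\|g\|^2)^{1/2}$, isometrically with $\mathbb{R}^{mn+m+n}$ under the Euclidean norm via $(E,f,g)\mapsto(\mathrm{vec}(E)^\top,f^\top,g^\top)^\top$. Under this identification $F'(A,b,c)$ becomes a matrix $M=[M_A\mid M_b\mid M_c]\in\mathbb{R}^{n\times(mn+m+n)}$ with $M_A=(r^\top\otimes(A^\top A)^{-1})L_T-(x^\top\otimes A^\dagger)$ (up to the sign of the second block), $M_b=A^\dagger$ and $M_c=(A^\top A)^{-1}$, so that $\|F'(A,b,c)\|_{\mathrm{op}}=\|M\|_2$. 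Since $MM^\top$ is symmetric positive semidefinite, $\|M\|_2=\sqrt{\lambda_{\max}(MM^\top)}=\sqrt{\|MM^\top\|}$, so it is enough to prove the identity $MM^\top=\bar M$.

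Next I would expand $MM^\top=M_AM_A^\top+M_bM_b^\top+M_cM_c^\top$ block by block. Since $A$ has full column rank, $A^\dagger=(A^\top A)^{-1}A^\top$ and $A^\dagger(A^\dagger)^\top=(A^\top A)^{-1}$, so the last two blocks are immediate: $M_bM_b^\top=(A^\top A)^{-1}$ and, since $(A^\top A)^{-1}$ is symmetric, $M_cM_c^\top=(A^\top A)^{-2}$; these contribute the $(A^\top A)^{-1}$ and $(A^\top A)^{-2}$ summands of $\bar M$. For $M_AM_A^\top$ I would expand into four pieces; the two diagonal ones reduce, via the Kronecker mixed-product rule and the fact that $L_T$ is a permutation matrix ($L_TL_T^\top=I$), to $(r^\top r)(A^\top A)^{-2}=\|r\|^2(A^\top A)^{-2}$ and $(x^\top x)A^\dagger(A^\dagger)^\top=\|x\|^2(A^\top A)^{-1}$.

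The heart of the argument is the two cross pieces of $M_AM_A^\top$. Here I would use the commutation structure of $L_T$, namely $L_T\,\mathrm{vec}(uv^\top)=\mathrm{vec}(vu^\top)$, to move one Kronecker factor past the other; combined with the identities $\mathrm{vec}(uv^\top)=v\otimes u$ and $(v^\top\otimes C)(w\otimes d)=(v^\top w)\,Cd$, a column-by-column computation shows that the two cross pieces equal $-B$ and $-B^\top$, where $B=A^\dagger rx^\top(A^\top A)^{-1}$. Collecting the four pieces gives $M_AM_A^\top=\|r\|^2(A^\top A)^{-2}+\|x\|^2(A^\top A)^{-1}-2\,\mathrm{sym}(B)$, and adding $M_bM_b^\top$ and $M_cM_c^\top$ yields precisely $\bar M$, which completes the proof.

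The step I expect to be the main obstacle is the evaluation of those cross terms: one must track the action of $L_T$ carefully so that, after the transposition, $r$ gets paired with $A^\dagger$ and $x$ with $(A^\top A)^{-1}$ in the correct order, producing $B$ and $B^\top$ rather than a mismatched product of Kronecker factors. This is also where the sign of the second block of $M_A$ matters, so it is worth fixing at the outset the conventions $r=b-Ax$ and $L_T\,\mathrm{vec}(A)=\mathrm{vec}(A^\top)$. Everything else is routine use of the standard $\mathrm{vec}$/Kronecker calculus together with the full-column-rank identities for $A^\dagger$.
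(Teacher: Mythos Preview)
Your proposal is correct and follows the same overall strategy as the paper: identify $F'(A,b,c)$ with a matrix $M=[M_A,M_b,M_c]$ and show that $MM^\top=\bar M$, whence $\|M\|=\sqrt{\|\bar M\|}$. The only difference is tactical, and it occurs precisely at the step you flag as the main obstacle. Instead of manipulating $L_T$ and Kronecker products to evaluate the cross pieces of $M_AM_A^\top$, the paper passes to the quadratic form: for a fixed $y\in\mathbb{R}^n$ it writes $y^\top F'(A,b,c)(E,f,g)$ as an inner product $w^\top[\mathrm{vec}(E)^\top,f^\top,g^\top]^\top$ with $w=M^\top y$, reads off the $\mathrm{vec}(E)$-block of $w$ as $\mathrm{vec}(S)$ where $S_{ij}=r^\top e_ie_j^\top(A^\top A)^{-1}y-y^\top A^\dagger e_ie_j^\top x$, and then computes $\|\mathrm{vec}(S)\|^2$ by a direct double sum over $i,j$. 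This yields $\|r\|^2\|(A^\top A)^{-1}y\|^2+\|x\|^2\|(A^\dagger)^\top y\|^2-y^\top(B+B^\top)y$ with no commutation-matrix bookkeeping at all. Your Kronecker route works too, but the paper's entrywise computation is shorter and sidesteps exactly the $L_T$ tracking you were anticipating as the delicate point.
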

\begin{proof}
	Let us define $M=[(r^\top\otimes \ata )L_T+x^\top\otimes A^\dagger,A^\dagger,\ata]$.
	We remind that 
	\begin{equation}\label{fprime}
	\|F'(A,b,c)\|_{\mathrm{op}}=\|M\|=\|M^\top\|:=\sup_{y\neq 0}\frac{\|M^\top y\|}{\|y\|}.
	\end{equation}
	Let us consider 
	\begin{align*}
	y^\top F'(A,b,c)(E,f,g)&=y^\top(A^\top A)^{-1}E^\top r-y^\top A^\dagger Ex+y^\top A^\dagger f+y^\top\apa g\\
	&=r^\top E\ata y-y^\top A^\dagger Ex+y^\top A^\dagger f+y^\top\apa g.
	\end{align*}
	We remind that $E=\sum_{i=1}^{n}\sum_{j=1}^{m}e_i^\top Ee_j$, for $e_i,e_j$ the $i$-th and $j$-th vectors of the canonical basis. 
	Then, we can rewrite the expression as
	\begin{equation*}
	[ \mathrm{vec}(S),y^\top A^\dagger,y^\top\apa]\cdot[\mathrm{vec}(E),f,g]^\top:=w^\top[\mathrm{vec}(E),f,g]^\top,
	\end{equation*}
	introducing the matrix $S$ such that:
	\begin{equation*}
	S_{i,j}=r^\top e_ie_j^\top\ata y-y^\top A^\dagger e_ie_j^\top x.
	\end{equation*}
	It follows that $w=M^\top y$. We are then interested in the norm of $w$. We can compute the squared norm of $\mathrm{vec}(S)$ as:
	\begin{eqnarray*}
	\|\mathrm{vec}(S)\|^2 &=& \sum_{i=1}^{n}\sum_{j=1}^{m}(r^\top e_i e_j^\top\ata y-y^\top A^\dagger e_ie_j^\top x)^2\\
	&=&\|r\|^2\|\ata y\|^2+\|x\|^2\|(A^\dagger)^\top y\|^2\\
	& & -y^\top(A^\dagger rx^\top\ata +\ata xr^\top(A^\dagger)^\top) y.
	\end{eqnarray*}
	Then, 
	\begin{align*}
	\|w\|^2 =&~y^\top((1+\|r\|^2)(A^\top A)^{-2}+(1+\|x\|^2)A^\dagger(A^\dagger)^\top-(A^\dagger rx^\top\ata\\
	&+ \ata xr^\top(A^\dagger)^\top) y:=y^\top \bar{M}y.
	\end{align*}
	From \cref{fprime} 
	\begin{equation*}
	\|F'(A,b,c)\|_{\mathrm{op}}=\|M\|=\sup_{y\neq 0}\frac{\|M^\top y\|}{\|y\|}=\sup_{y\neq 0} \frac{\sqrt{y^\top \bar{M}y}}{\|y\|}=\sqrt{\rule[0mm]{0mm}{3.15mm}\|\bar{M}\|},
	\end{equation*}
	and we obtain the thesis.
\end{proof}

We remark here a fundamental difference with the least squares case. In the conditioning of least squares or structured conditioning of the normal equations, the term $\kappa^2(A)$ appears multiplied by the norm of the residual. This is interpreted saying that the sensitivity of the problem depends on $\kappa(A)$ for small or zero residual problems and on $\kappa^2(A)$ for all other problems \cite{bjorck1996numerical,higham2002accuracy}.  
In this case, we notice from \cref{conditioning} that a term proportional to $\kappa^2(A)$ will always be present.


\subsection{Backward error analysis}
In this section, we address the computation of the backward error, i.e. we consider the following problem.  Let $A\in\mathbb{R}^{m\times n}$, $b\in\mathbb{R}^m$, $c\in\mathbb{R}^n$ and $\tx$ a perturbed solution to \cref{base}.  Find the smallest perturbation $(E,f,g)$ of $(A,b,c)$ such that the vector $\tx$ exactly solves 
\begin{equation*}
(A+E)^\top(A+E)x=(A+E)^\top(b+f)+(c+g),
\end{equation*}
i.e. given
\begin{align*}
\mathcal{G}:= \{(E,f,g)&, E\in\mathbb{R}^{m\times n}, f\in\mathbb{R}^m, g\in\mathbb{R}^n\,:\nonumber\\
\,&(A+E)^\top(A+E)\tilde{x}=(A+E)^\top(b+f)+(c+g)\},
\end{align*}
we want to compute the quantity:
\begin{equation}\label{eta}
\eta(\tx,\theta_1,\theta_2)=\min_{(E,f,g)\in\mathcal{G}}\|[E,\theta_1 f,\theta_2 g]\|_F^2:=\min_{(E,f,g)\in\mathcal{G}}\|E\|_F^2+\theta_1^2 \|f\|^2+\theta_2^2 \|g\|^2,
\end{equation}
with $\theta_1,\theta_2$ positive parameters.
As usually done in the literature on backward errors, see, for example,  \cite{gratton1996condition,walden}, we measure the perturbations in a weighted Frobenius norm.

We prove specific results related to the backward error. We provide an explicit representation of the set of admissible perturbations on the matrix (\cref{teo_backward}) and a linearization estimate for $\eta(\tx,\theta_1,\theta_2)$ (\cref{lemma_linearization}), respectively.

\vskip 5pt

Given $v\in \mathbb{R}^m$, we define
\begin{equation*}
v^\dagger=\begin{cases}
\displaystyle \frac{v^\top}{\|v\|^2} \quad &\text{ if } v\neq 0,\\
0 \quad&\text{ if }	v=0.
\end{cases}
\end{equation*}
We remind the following properties that will be used later:
\begin{align*}
(I_m-vv^\dagger)v=0, && vv^\dagger v=v.
\end{align*}

We consider just perturbations on the matrix and we give next an explicit representation of the set of admissible perturbations.

\begin{theorem}\label{teo_backward}
	Let $A\in\mathbb{R}^{m\times n}$, $b\in\mathbb{R}^m$, $c,\tx\in\mathbb{R}^n$ and assume that $\tx\neq0$. Let $\tr=b-A\tx$ and define two sets $\mathcal{E,M}$ by
	\begin{align*}
	\mathcal{E}=&\{E\in\mathbb{R}^{m\times n}:\;(A+E)^\top(b-(A+E)\tx)=-c\,\},\\
	\mathcal{M}=&\{v\left(\alpha c^\top-v^\dagger A\right)+(I_m-vv^\dagger) (\tr\tx^\dagger+Z(I_n-\tx\tx^\dagger))\;:\\ &v\in\mathbb{R}^m,\,Z\in\mathbb{R}^{m\times n}, 
	\alpha \in \mathbb{R}, \, s.t.\,\, \alpha\|v\|^2(v^\dagger b-\alpha c^\top\tx)=-1\}.
	\end{align*}
	Then $\mathcal{E}=\mathcal{M}$. 
\end{theorem}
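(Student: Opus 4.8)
The plan is to prove the two inclusions separately, exploiting that the defining equation of $\mathcal{E}$ has the structure of a coupled pair ``$\ta\tx=b-\hat r$ and $\ta^\top\hat r=-c$''. Write $\ta:=A+E$ and $\hat r:=b-\ta\tx=\tr-E\tx$; then $E\in\mathcal{E}$ is equivalent to $\ta^\top\hat r=-c$. Note that $\hat r\neq0$ for every $E\in\mathcal{E}$ whenever $c\neq0$ (the case considered here), since $\hat r=0$ would force $-c=\ta^\top\hat r=0$. Throughout, I would use the elementary identities $\tx^\dagger\tx=1$, $(I_n-\tx\tx^\dagger)\tx=0$, $(I_m-vv^\dagger)v=0$, $vv^\dagger v=v$, $vv^\dagger b=(v^\dagger b)v$, together with $A\tx+\tr=b$.

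For $\mathcal{M}\subseteq\mathcal{E}$: take $E\in\mathcal{M}$ with parameters $v,Z,\alpha$ (the constraint forces $v\neq0$), and rewrite $\ta=(I_m-vv^\dagger)\bigl(A+\tr\tx^\dagger+Z(I_n-\tx\tx^\dagger)\bigr)+\alpha vc^\top$. Applying this to $\tx$ makes the $Z$-term vanish and yields $\ta\tx=b-\beta v$ with $\beta:=v^\dagger b-\alpha c^\top\tx$, hence $\hat r=\beta v$ is a multiple of $v$. Then $\ta^\top\hat r=\beta\,\ta^\top v$, and because $(I_m-vv^\dagger)v=0$ only the rank-one term of $\ta^\top$ survives, giving $\ta^\top\hat r=\alpha\beta\|v\|^2c$. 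The constraint in the definition of $\mathcal{M}$ is exactly $\alpha\|v\|^2\beta=-1$, so $\ta^\top\hat r=-c$, i.e. $E\in\mathcal{E}$.

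For $\mathcal{E}\subseteq\mathcal{M}$: given $E\in\mathcal{E}$, set $\ta=A+E$, $\hat r=b-\ta\tx\neq0$, and make the key choice $v:=\hat r$, $\alpha:=-1/\|\hat r\|^2$, so $\alpha\|v\|^2=-1$. The compatibility identity $\hat r^\top b=(\ta\tx+\hat r)^\top\hat r=(\ta^\top\hat r)^\top\tx+\|\hat r\|^2=-c^\top\tx+\|\hat r\|^2$ gives $v^\dagger b-\alpha c^\top\tx=1$, which is precisely the constraint demanded of $(v,Z,\alpha)$. It then remains to find $Z$ with $(I_m-vv^\dagger)Z(I_n-\tx\tx^\dagger)=W$, where $W:=\ta-\alpha vc^\top-(I_m-vv^\dagger)(A+\tr\tx^\dagger)$; such a $Z$ exists (one may take $Z=W$) as soon as $vv^\dagger W=0$ and $W\tx=0$. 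Simplifying with the identities above, $vv^\dagger W=v(v^\dagger\ta-\alpha c^\top)$, which vanishes iff $\ta^\top v=\alpha\|v\|^2c=-c$, i.e. iff $E\in\mathcal{E}$; and $W\tx=(v^\dagger b-\alpha c^\top\tx-1)v$, which vanishes by the constraint just checked. Hence $E\in\mathcal{M}$, completing the proof.

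The only creative step is the choice $v=\hat r$, $\alpha=-1/\|\hat r\|^2$ in the second inclusion; everything else is bookkeeping with rank-one updates and the two projectors $I_m-vv^\dagger$ and $I_n-\tx\tx^\dagger$. I expect the main obstacle to be the two reductions ``$vv^\dagger W=0\Leftrightarrow\ta^\top\hat r=-c$'' and ``$W\tx=0\Leftrightarrow\beta=1$'', which both hinge on the compatibility identity $\hat r^\top b=\|\hat r\|^2-c^\top\tx$ (itself a consequence of $\ta^\top\hat r=-c$) and on carefully tracking which terms survive each projector. The degenerate case $\hat r=0$ can occur only when $c=0$ — the classical least-squares backward-error situation — and lies outside the hypotheses of interest, so it needs only a brief remark.
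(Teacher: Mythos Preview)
Your proof is correct and follows essentially the same strategy as the paper: for $\mathcal{E}\subseteq\mathcal{M}$ you make the identical key choice $v=\hat r=\tr-E\tx$ and $\alpha=-1/\|v\|^2$, verify the constraint via the identity $\hat r^\top b=\|\hat r\|^2-c^\top\tx$, and exhibit a suitable $Z$ (you take $Z=W$, the paper takes $Z=E$, but only the projected part matters); for $\mathcal{M}\subseteq\mathcal{E}$ both arguments compute $\ta\tx$ and $\ta^\top v$ and invoke the constraint. The only cosmetic difference is that you organize the computations around $\ta=A+E$ whereas the paper decomposes $E$ directly via the block identity $E=(I_m-vv^\dagger)E\tx\tx^\dagger+vv^\dagger E+(I_m-vv^\dagger)E(I_n-\tx\tx^\dagger)$.
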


\begin{proof}
	The proof is inspired by that of \cite[Theorem 1.1]{walden}.  
	First, we prove $\mathcal{E}\subseteq\mathcal{M}$. Let $E\in\mathcal{E}$, then $E$ can be written as 
	\begin{equation}\label{decompE}
	E=(I_m-vv^\dagger)E\tx\tx^\dagger+vv^\dagger E+(I_m-vv^\dagger)E(I_n-\tx\tx^\dagger).
	\end{equation}
	We choose $v=\tr-E\tx$. Then $E\tx=\tr-v$ and
	\begin{equation}\label{eq2}
	(I_m-vv^\dagger)E\tx=(I_m-vv^\dagger)\tr.
	\end{equation} 
	Moreover, it holds 
	\begin{equation}\label{eq1}
	-c=(A+E)^\top (b-(A+E)\tx)=(A+E)^\top (\tr-E\tx)=(A+E)^\top v.
	\end{equation} 
	From \cref{eq1}, $v^\dagger E=-\displaystyle \frac{c^\top}{\|v\|^2}-v^\dagger A$. 
	Hence, from relations \cref{decompE}, \cref{eq2} and \cref{eq1}
	\begin{equation*}
	E=(I_m-vv^\dagger)\tr\tx^\dagger-v\left(\frac{c^\top}{\|v\|^2}+v^\dagger A\right)+(I_m-vv^\dagger)E(I_n-\tx\tx^\dagger).
	\end{equation*}
	Then $E\in\mathcal{M}$ with $v=\tr-E\tx$, $\alpha=-\displaystyle \frac{1}{\|v\|^2}$ and $Z=E$, as 
	\begin{align*}
	\alpha \|v\|^2(v^\dagger b-\alpha c^\top\tx)&=-\frac{1}{\|v\|^2}(v^\top b+ c^\top\tx)=-\frac{1}{\|v\|^2}(v^\top b-v^\top(A+E)\tx)\\
	&=-\frac{1}{\|v\|^2}v^\top(\tr-E\tx)=-1,
	\end{align*}
	where the second equality follows from \cref{eq1}.
	
	Conversely, let $E\in\mathcal{M}$. Then,
	\begin{align}
	E\tx&=\alpha vc^\top\tx -vv^\dagger A\tx+\tr-vv^\dagger\tr=	\alpha vc^\top\tx -vv^\dagger b+\tr,\label{eq11}\\
	E^\top v&=\alpha\|v\|^2c-A^\top v\label{eq12}.  
	\end{align}
	Then,
	\begin{align*}
	(A+E)^\top(b-(A+E)\tx)&=(A+E)^\top(\tr-E\tx)=(A+E)^\top(vv^\dagger b-\alpha vc^\top\tx)\\&=(A+E)^\top v(v^\dagger b-\alpha c^\top\tx)=
	\alpha\|v\|^2c(v^\dagger b-  \alpha c^\top\tx)=-c,
	\end{align*}
	where the second equality follows from \cref{eq11}, the fourth one from \cref{eq12} and the last one from the constraint in $\mathcal{M}$.
	From this we conclude that $E\in\mathcal{E}$, which completes the proof. 
\end{proof}
Let us remark that if $c=0$ we recover the known result for least squares problems given in \cite[Theorem 1.1]{walden}. We also note that the parametrization of the set of perturbations $\mathcal{E}$ is similar to that obtained for equality constrained least squares problems in \cite{cox1999}, where the constraint is however not present.   

Due to the constraint in $\mathcal{M}$, it is rather difficult to find an analytical formula for $\eta(\tx,\theta_1,\theta_2)$. It is however easy to find a linearization estimate for $\eta(\tx,\theta_1,\theta_2)$ with $\theta_1,\theta_2$ stricly positive, i.e.,  given 
\begin{equation*}
h(A,b,c,x)=A^\top(b-Ax)+c,
\end{equation*} 
to find $(E,f,g)$ such that
\begin{subequations}\label{lin}
	\begin{align}
	\bar{\eta}(\tx,\theta_1,\theta_2)&=\min\|[E,\theta_1 f,\theta_2 g]\|_F\quad\text{s.t.}\\
	h(A,b,c,\tx)&+[J_A,\theta_1^{-1}J_b,\theta_2^{-1}J_c]\begin{bmatrix}
	\mathrm{vec}(E)\\\theta_1 f\\\theta_2 g
	\end{bmatrix}=0,
	\end{align}
\end{subequations} 
where $J_A$, $J_b$ and $J_c$ are the Jacobian matrices of $h$ with respect to $\mathrm{vec}(A)$, $b$, $c$ \cite{chang}. 

\begin{lemma}\label{lemma_linearization}
	Let ${\eta}(\tx,\theta_1,\theta_2)$ be defined as in \cref{eta}, $\bar{\eta}(\tx,\theta_1,\theta_2)$ be defined as in \cref{lin} and $\tilde{r} = b - A \tilde{x}$.
	Then \begin{equation*}
	\bar{\eta}(\tx,\theta_1,\theta_2)=\Bigg\|\begin{bmatrix}
	vec(E)\\\theta_1 f\\
	\theta_2 g
	\end{bmatrix}\Bigg\|=\|J^\dagger h(A,b,c,\tx)\|,
	\end{equation*}
	with 
	\begin{align*}
	J:=[I_n\otimes \tilde{r}^\top-A^\top(\tx\otimes I_m),\theta_1^{-1}A^\top,\theta_2^{-1}I_n].
	\end{align*}
	Moreover, assume that $\tilde{r}\neq 0$. If $4\eta_1\|J^\dagger\|\eta(\tx,\theta_1,\theta_2)\leq1$, then
	\begin{equation*}
	\frac{2}{1+\sqrt{2}}~\bar{\eta}(\tx,\theta_1,\theta_2)\leq\eta(\tx,\theta_1,\theta_2)\leq 2~\bar{\eta}(\tx,\theta_1,\theta_2),
	\end{equation*}
	where $\eta_1=\sqrt{\theta_1^{-2}+\theta_2^{-2}+\|\tx\|^2}$.
\end{lemma}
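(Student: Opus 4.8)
The plan is to treat \cref{lin} as an equality-constrained linear least squares problem and recognize that its minimum-norm solution is given by the pseudoinverse. First I would compute the Jacobian matrices $J_A$, $J_b$, $J_c$ of $h(A,b,c,x)=A^\top(b-Ax)+c$ with respect to $\mathrm{vec}(A)$, $b$, $c$ at the point $\tx$. Writing $\tr = b - A\tx$, differentiating $A^\top(b-Ax)$ in $A$ gives, via the identity $\mathrm{vec}(PXQ)=(Q^\top\otimes P)\mathrm{vec}(X)$, the term $(I_n\otimes \tr^\top)\mathrm{vec}(E) - A^\top(\tx\otimes I_m)\mathrm{vec}(E)$; differentiating in $b$ gives $A^\top f$; differentiating in $c$ gives $g$. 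Inserting the weights $\theta_1^{-1},\theta_2^{-1}$ into the scaled unknowns as in \cref{lin} yields exactly the matrix $J=[I_n\otimes \tr^\top-A^\top(\tx\otimes I_m),\theta_1^{-1}A^\top,\theta_2^{-1}I_n]$. Since $J$ has full row rank $n$ (the block $\theta_2^{-1}I_n$ alone is invertible), the constraint $h(A,b,c,\tx) + J\,[\mathrm{vec}(E);\theta_1 f;\theta_2 g] = 0$ is consistent, and the minimum-norm solution of the linear system is $[\mathrm{vec}(E);\theta_1 f;\theta_2 g] = -J^\dagger h(A,b,c,\tx)$, whose norm is $\|J^\dagger h(A,b,c,\tx)\|$ (the sign is irrelevant for the norm). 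This establishes the first displayed equality.

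For the two-sided bound relating $\eta$ and $\bar\eta$, the plan is to invoke a general result comparing the true (nonlinear) backward error with its linearization — this is precisely the type of estimate in \cite{chang}. The nonlinear map $(E,f,g)\mapsto h(A+E,b+f,c+g,\tx)$ has $h(A,b,c,\tx)$ as its value at the origin, $J$ (suitably arranged with the weights) as its derivative at the origin, and a Hessian whose norm is controlled: the only nonlinearity is the quadratic term $E^\top(-E\tx)$ coming from $(A+E)^\top(A+E)\tx$, so the second-order remainder is bounded by $\|E\|_F^2\|\tx\| \le \|[E,\theta_1 f,\theta_2 g]\|_F^2\,\eta_1$ with $\eta_1=\sqrt{\theta_1^{-2}+\theta_2^{-2}+\|\tx\|^2}$ absorbing the scaling. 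Under the smallness hypothesis $4\eta_1\|J^\dagger\|\,\eta(\tx,\theta_1,\theta_2)\le 1$, a contraction/fixed-point argument (or directly the quoted lemma from \cite{chang}) shows that a true solution exists near the linearized one and that the norms satisfy $\frac{2}{1+\sqrt 2}\,\bar\eta \le \eta \le 2\,\bar\eta$. The constants $2$ and $\frac{2}{1+\sqrt2}$ are exactly those arising from solving the scalar quadratic inequality $t \le \bar\eta + \eta_1\|J^\dagger\| t^2$ under the stated hypothesis; I would carry out that scalar estimate explicitly to pin down the constants.

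The requirement $\tr\neq 0$ enters because when $\tr=0$ the point $\tx$ is already an exact solution (backward error zero), so the nonzero-residual assumption guarantees the linearization is nondegenerate and the geometric argument applies; I would note this at the start of the second part.

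I expect the main obstacle to be the careful bookkeeping of the weights $\theta_1,\theta_2$ through both the Jacobian computation and the Hessian bound — making sure the quantity $\eta_1$ that appears in the smallness condition is exactly the norm of the scaling that converts the unweighted second-order remainder into the weighted one, and that the pseudoinverse $J^\dagger$ acts on the correctly-scaled unknowns. The algebra of the Kronecker-product Jacobians is routine but error-prone (in particular getting the term $-A^\top(\tx\otimes I_m)$ rather than $-(\tx^\top\otimes A^\top)$ or a transpose thereof), so I would verify it by testing the action of $J$ on $\mathrm{vec}(E)$ against the direct expansion of $(A+E)^\top(b-(A+E)\tx)+c$ to first order. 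The comparison inequality itself I would not reprove from scratch but cite from \cite{chang}, checking only that our $h$ satisfies its hypotheses with the stated constants.
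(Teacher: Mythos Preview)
Your approach matches the paper's: compute the Jacobian explicitly (yielding exactly the matrix $J$), invoke full row rank to get the pseudoinverse formula for $\bar\eta$, and then cite an external result for the two-sided comparison. The paper's own proof is even terser---it simply points to \cite{liu} (not \cite{chang}) for both the linearization computation and the comparison inequality (specifically Corollary~2 there), so you would want to adjust the citation.

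Two small corrections to your sketch. First, the second-order remainder of $h(A+E,b+f,c+g,\tx)$ is not only $-E^\top E\tx$: expanding $(A+E)^\top(b+f-(A+E)\tx)$ also produces the bilinear term $E^\top f$, and this cross term is what forces the $\theta_1^{-2}$ contribution into $\eta_1$. Second, your reading of the hypothesis $\tr\neq 0$ is off: $\tr=b-A\tx=0$ does \emph{not} make $\tx$ an exact solution of \cref{base} (one still has $h(A,b,c,\tx)=c$, generally nonzero). The nonvanishing of $\tr$ is rather a technical hypothesis inherited from the argument in \cite{liu}; since $J$ has full row rank regardless (via the $\theta_2^{-1}I_n$ block), the linearized problem is always well-posed, and you should simply carry the assumption through when invoking the cited corollary.
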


\begin{proof}
	The first assertion follows as a simple modification of the results in Section 2 of \cite{liu}. Simply adding the term corresponding to $c$ in the linearization leads to:
	\begin{align*}
	[J_A,\theta_1^{-1}J_b,\theta_2^{-1}J_c]=[I_n\otimes \tilde{r}^\top-A^\top(\tx\otimes I_m),\theta_1^{-1}A^\top,\theta_2^{-1}I_n]=J,
	\end{align*}
	from which the result follows. The second result can be obtained repeating the arguments of \cite[Corollary 2]{liu}.
\end{proof}

The linearized estimate is usually called an asymptotic estimate, as it becomes exact in the limit for $\tilde{x}$ that tends to the exact solution $x$ \cite{liu}. It also has the advantage of being easily computable.

\vskip 5pt
In this section we have considered theoretical questions related to the solution of \cref{base}. In the following, we will consider the approximation of a solution in finite arithmetic.


\section{Finite arithmetic implementation of CG for {$A^\top A\lowercase{x}=A^\top\lowercase{b}+\lowercase{c}$}}\label{sec_error_analysis}
In this section, we address the implementation of CG in finite arithmetic. We first discuss the well-known case $c=0$, that corresponds to the normal equations for least squares problems.  In particular, we briefly remind the issues arising in finite precision when applying CG to such systems, and the successful algorithmic ingredients that make CGLS more stable than CG. We then consider the more general case $c\neq 0$ and show that the same issues also arise in this case. These observations are found crucial for the development of stable solution methods for \cref{base}.  In  \cref{sec_methods} indeed, the successful ingredients employed in the case $c=0$ will be extended to the more general case $c\neq 0$. 

\subsection{Case $c=0$}
In \cite{bjorck1998stability} the authors study the implementation of CG applied to the normal equations in finite arithmetic and, thanks to a rounding error analysis, individuate the operations which may lead to amplifications of initial errors in the data, and so to large perturbations in the solution. Then, they propose the CGLS method, which is a more stable implementation of CG for normal equations. For convenience, we report both algorithms, in \cref{algo_cg} and \cref{algo_cgls}, respectively.

Two different sources of numerical error do occur when applying CG on the normal equations. First, computing $\mathrm{fl}(A^\top A)$ can lead to a loss of information.  CGLS avoids forming matrix $A^\top A$ explicitly. Matrix-vector products are thus computed exploiting subsequently the action of $A$ and of its transpose on a vector, whereas the computation of $p_k^\top A^\top Ap_k$ is performed as $\|Ap_k\|^2$.  The second source of error comes from the right-hand side $b$. Assuming that $x_0 = 0$, the only information related to  $b$ is in the initialization of $p_1=A^\top (b-Ax_0)$, see \cref{algo_cg}. No further reference to $b$ is made in the iterative procedure. It then follows that roundoff errors that occur in computing the vector $A^\top b$ cannot be compensated for later \cite{bjorck1998stability}.
Hence, in CGLS, the residual of the normal equations $r_k=A^\top(b-Ax_k)$ is not recurred directly. Rather, $d_k=b-Ax_k$ is recurred, while $r_k$ is then updated multiplying $d_k$ by $A^\top$. The next result is also proved, which is an extension of the corresponding result in \cite{greenbaum}, where the author studies the finite precision implementation of the class of iterative methods considered in  \cref{lemma_bjork}. 

\begin{lemma}\label{lemma_bjork}Let $A\in\mathbb{R}^{m\times n}$. 
	Consider an iterative method in which each step updates the approximate solution $x_k$ and the residual $r_k$ of the system $Ax=b$ using the two formulas:
	\begin{align*}
	x_{k+1}&=x_k+\alpha_k~p_k,\\
	r_{k+1}&=r_k-\alpha_k~Ap_k,\\
	\end{align*}
	with $\alpha_k \in \mathbb{R}$, $p_k \in \mathbb{R}^n$ and $x \in \mathbb{R}^n$ the true solution. 
	The difference between the true residual $b-Ax_k$ and the recursively computed residual  $r_k$ satisfies
	\begin{equation}\label{rel_res}
	\frac{\|b-Ax_k-r_k\|}{\|A\|\|x\|}\leq u~O(k)\left(1+\Theta_k+\frac{\|r\|}{\|A\|\|x\|}\right),
	\end{equation}
	with $u$ the machine precision, $r=b-Ax$ and $\Theta_k=\max_{j\leq k}\|x_j\|/\|x\|$.
\end{lemma}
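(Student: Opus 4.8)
The plan is to track the \emph{residual gap} $\delta_k := (b-A\hat{x}_k)-\hat{r}_k$, where hats denote the vectors actually produced in floating point arithmetic. The argument follows the lines of \cite{greenbaum}, the only new ingredient being that the update of $r_k$ now involves the rectangular matrix $A$ through the product $Ap_k$.

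First I would write the computed recurrences as $\hat{x}_{k+1}=\hat{x}_k+\hat{\alpha}_k\hat{p}_k+\xi_k$ and $\hat{r}_{k+1}=\hat{r}_k-\hat{\alpha}_kA\hat{p}_k+\eta_k$, where $\xi_k$ collects the rounding errors of the scalar--vector product and the vector sum, while $\eta_k$ collects those of forming $A\hat{p}_k$, scaling it, and subtracting. The standard componentwise error models give $\|\xi_k\|\le c_1 u\,(\|\hat{x}_k\|+|\hat{\alpha}_k|\,\|\hat{p}_k\|)$ and $\|\eta_k\|\le c_2 u\,(\|\hat{r}_k\|+\|A\|\,|\hat{\alpha}_k|\,\|\hat{p}_k\|)$, with $c_1,c_2$ modest constants depending only on $m$ and $n$; the factor $\|A\|$ in the second bound comes from $\|\mathrm{fl}(A\hat{p}_k)-A\hat{p}_k\|\le c\,u\,\|A\|\,\|\hat{p}_k\|$. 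Subtracting the residual recurrence from $b-A\hat{x}_{k+1}$ and inserting the update for $\hat{x}_{k+1}$, the terms $\mp\hat{\alpha}_k A\hat{p}_k$ cancel and one obtains the exact identity $\delta_{k+1}=\delta_k-A\xi_k-\eta_k$. Assuming the initial residual is formed to working accuracy (e.g. $x_0=0$, $\hat{r}_0=b$, so $\delta_0=0$), summation yields $\delta_k=-\sum_{j=0}^{k-1}(A\xi_j+\eta_j)$, hence $\|\delta_k\|\le\sum_{j=0}^{k-1}(\|A\|\,\|\xi_j\|+\|\eta_j\|)$.

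Next I would bound the quantities on the right in terms of $\|A\|$, $\|x\|$, $\|r\|$ and $\Theta_k=\max_{j\le k}\|\hat{x}_j\|/\|x\|$. By definition of $\Theta_k$, $\|\hat{x}_j\|\le\Theta_k\|x\|$ for $j\le k$; from the $x$-update, $|\hat{\alpha}_j|\,\|\hat{p}_j\|=\|\hat{x}_{j+1}-\hat{x}_j-\xi_j\|\le(2\Theta_k+O(u))\|x\|$; and from the definition of $\delta_j$ together with $r=b-Ax$, $\|\hat{r}_j\|\le\|r\|+\|A\|\,\|x-\hat{x}_j\|+\|\delta_j\|\le\|r\|+(1+\Theta_k)\|A\|\,\|x\|+\|\delta_j\|$. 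Substituting these into the bounds for $\|\xi_j\|$ and $\|\eta_j\|$ and collecting terms gives $\|A\|\,\|\xi_j\|+\|\eta_j\|\le c\,u\big(\|r\|+(1+\Theta_k)\|A\|\,\|x\|+\|\delta_j\|\big)$ for a suitable constant $c$, and summing over $j=0,\dots,k-1$,
\[
\|\delta_k\|\le c\,u\,k\big(\|r\|+(1+\Theta_k)\|A\|\,\|x\|\big)+c\,u\sum_{j=0}^{k-1}\|\delta_j\|.
\]
Applying the discrete Gronwall inequality (valid since $u\,k$ is negligible in the regime where the analysis is meaningful) absorbs the last sum into the constant, leaving $\|\delta_k\|\le c'\,u\,k\big(\|r\|+(1+\Theta_k)\|A\|\,\|x\|\big)$; dividing through by $\|A\|\,\|x\|$ reproduces \cref{rel_res} with $O(k)=c'k$.

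\textbf{Main obstacle.} The only delicate points are stating the per-operation rounding-error models cleanly — especially the dimension-dependent constant hidden in the matrix--vector product — and justifying rigorously that the self-referential term $c\,u\sum_j\|\delta_j\|$ is genuinely of higher order; both are standard and handled exactly as in \cite{greenbaum,higham2002accuracy}. Everything else is bookkeeping.
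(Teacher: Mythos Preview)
The paper does not actually prove this lemma: it is stated as a known result, attributed to \cite{bjorck1998stability} as an extension of the analysis in \cite{greenbaum}, and no proof is given in the text. Your outline correctly reproduces the standard Greenbaum residual-gap argument that those references contain, so there is nothing to compare beyond noting that your sketch \emph{is} the proof the paper is citing.
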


 \cref{lemma_bjork} is used in \cite{bjorck1998stability} to deduce a bound on the forward error for such methods: 
\begin{equation}\label{bound_error}
\frac{\|x-x_k\|}{\|x\|}\leq \kappa(A)~u~O(k)\left(3+\frac{\|r\|}{\|A\|\|x\|}\right)+\kappa(A)\frac{\|r-r_k\|}{\|A\|\|x\|}.
\end{equation}
It is shown in \cite{bjorck1998stability,greenbaum} that the residuals $r_k$ generated by CG tend towards zero, so that $x_k$ will eventually remain unchanged. In the bound $k$ can therefore be replaced by $S$, the number of iterations necessary to reach this steady-state. 
Then, if 
\begin{equation}\label{cond_bjork}
\frac{\|r-r_S\|}{\|A\|\|x\|}\leq c_1 u
\end{equation}
for a strictly positive constant $c_1$, we deduce that the method may compute more accurate solutions than a backward stable method \cite{bjorck1998stability}. 

\subsection{Case $c\neq 0$}
When using CG applied to \cref{base}, observations similar to those made in the previous section can be made. If the right-hand side $A^\top b+c$ is formed, an error occurs. We can indeed quantify this error as in \cite{bjorck1998stability}, \S 4. Following \cite{bjorck1998stability}, we assume the following model of floating point arithmetic on a machine with roundoff $u$:
\begin{equation*}
\mathrm{fl}(a\, \mathrm{op}\, b)=(a \,\mathrm{op}\, b)(1+\delta(ab)), \quad \lvert \delta (ab)\rvert\leq u, \quad \mathrm{op}\in\{+,-,\times,\div\}
\end{equation*}
This leads to a bound for the roundoff error in the product of two matrices $A\in\mathbb{R}^{m\times n}$ and $B\in \mathbb{R}^{n\times p}$:
\begin{equation}\label{bound_prodm}
\lvert \mathrm{fl}(AB)-AB\rvert\leq \gamma_n\lvert A\rvert\lvert B\rvert,\qquad \gamma_n=nu/(1-nu),
\end{equation}
where the inequalities are to be interpreted componentwise \cite{bjorck1998stability,higham2002accuracy}. It is assumed that $nu \ll 1$.

Errors are introduced when the product $A^\top b$ and the sum with $c$ in the right-hand side are performed. The perturbed solution $\tx=x+\delta x$ corresponding to the system with perturbed right-hand side $\mathrm{fl}(\mathrm{fl}(A^\top b)+c)$ will satisfy, assuming that the linear systems are solved exactly:
\begin{equation}\label{system_pert}
A^\top A\tx=\mathrm{fl}(\mathrm{fl}(A^\top b)+c).
\end{equation}
From relation \cref{bound_prodm} we obtain
\begin{align}\label{rel}
\mathrm{fl}(A^\top b)&=A^\top b+\delta_1,\quad\quad \lvert \delta_1\rvert \leq \gamma_m\lvert A^\top\rvert \lvert b\rvert,\\
\mathrm{fl}(\mathrm{fl}(A^\top b)+c)&=A^\top b+c+\delta_1+\delta_2,\quad \lvert\delta_2\rvert\leq u \lvert A^\top b+c\rvert+O(u^2)\label{rel2}.
\end{align}
Subtracting \cref{base} from \cref{system_pert} and solving for $\delta x$, we obtain from \cref{rel}-\cref{rel2}:
\begin{align}
\lvert \delta x\rvert \leq& \lvert (A^\top A)^{-1}\rvert \lvert\delta _1+\delta_2\rvert\leq \gamma_{m}\lvert (A^\top A)^{-1}\rvert\lvert A^\top\rvert\lvert b\rvert+u \lvert (A^\top A)^{-1}\rvert \lvert A^\top b+c\rvert\nonumber\\
\leq & \gamma_{m+1}\lvert (A^\top A)^{-1}\rvert\lvert A^\top\rvert\lvert b\rvert+u \lvert (A^\top A)^{-1}\rvert \lvert c\rvert
 \label{et}
\end{align}
From the singular value decomposition of $A$, it is possible to deduce the following relations, see \cite{bjorck1996numerical} \S1.4.3: 
\begin{equation*}
\|(A^\top A)^{-1}\|=\frac{1}{\sigma_{\min(A)}^2},\, \|A\|=\sigma_{\max}(A), \, \|A^\dagger\|=\frac{1}{\sigma_{\min(A)}}.
\end{equation*}
Exploiting these relations and extending inequality \cref{et} to the norm, we obtain:
\begin{align}
\|\delta x\|\leq& \gamma_{m+1}\| (A^\top A)^{-1}\| \|A^\top\|\| b\|+u \| (A^\top A)^{-1}\|\| c\|, \nonumber\\
\le &\gamma_{m+1}\kappa(A) \|A^\dagger\|\| b\|+u \| A^\dagger\|^2\| c\|, \nonumber\\
\le &u \kappa^2(A) \left(\frac{m+1}{1-(m+1)u}\frac{\|b\|}{\|A\|}+\frac{\|c\|}{\|A\|^2}\right)\label{bound}.
\end{align}
Since $b$ and $c$ are not used in the iterative process, this initial error will not be cancelled and the best forward error bound we can hope for will include the term given above, which may be larger than the error introduced by the solution of the linear system, as we will see in  \cref{sec_num}.
Then, if this initial rounding error is larger than the optimal bound
$ \sqrt{\rule[0mm]{0mm}{3.15mm}\|\bar{M}\|}\|[A,b,c]\|_Fu$ we can expect CG to produce less than optimal solutions.
This will happen for example if    
\begin{equation*}
	\|b\|\|A\|+\|c\|>>
	\left[1+\|r\|+2\sqrt{\|c\|\|x\|}+\frac{1+\|x\|}{\|A^\dagger\|}\right]\sqrt{\|A\|_F^2+\|b\|^2+\|c\|^2},
	\end{equation*} 
	where the second term has been obtained from an upper bound to the condition number $ \sqrt{\rule[0mm]{0mm}{3.15mm}\|\bar{M}\|}$.
	
 Similarly, the residual of the perturbed solution may differ from the true residual by as much as
\begin{align*}
&\|A^\top b+c-(A^\top A)(x + \delta x) -(A^\top b+c-A^\top A x)\| =\\
&=\|A^\top A\delta x\|\leq \gamma_{m+1} \kappa(A)^2 \|A\|\| b\|+u \kappa(A)^2\| c\|,
\end{align*}
and again, this cannot be cancelled in the iterations.

\begin{algorithm}
	\caption{Conjugate Gradient for $Ax=b$ (CG)}
	\label{algo_cg}
	\begin{algorithmic}
		\STATE{Input: $A$, $b$, $x_0$.}
		\STATE{Define 	$r_0=b-Ax_0$, $p_1=r_0$.}\\
		\FOR{ $k=1,2,\dots$ }
		\STATE{$\alpha_k=\displaystyle \frac{r_{k-1}^\top r_{k-1}}{p_k^\top Ap_k}$,}
			\STATE{$x_{k}=x_{k-1}+\alpha_kp_k$,}
		\STATE{$r_{k}=r_{k-1}-\alpha_kAp_k$, }
		\STATE{$\beta_{k}=\displaystyle \frac{r_{k}^\top r_{k}}{r_{k-1}^\top r_{k-1}}$,}
		\STATE{$p_{k+1}=r_{k}+\beta_{k}p_k$.}
		\ENDFOR
	\end{algorithmic}
\end{algorithm}

\begin{algorithm}
	\caption{Conjugate Gradient Least Squares for $A^\top Ax=A^\top b$ (CGLS)}
	\label{algo_cgls}
	\begin{algorithmic}
		\STATE{ Input: $A$, $b$, $x_0$.}
		\STATE{Define 	$d_0=b-Ax_0$, $r_0=A^\top d_0$, $p_1=r_0$.}\\
		\FOR{ $k=1,2,\dots$ }
		\STATE{$t_{k}=Ap_{k}$,}
		\STATE{ $\alpha_k=\displaystyle \frac{r_{k-1}^\top r_{k-1}}{t_k^\top t_k}$,}
		\STATE{ $x_{k}=x_{k-1}+\alpha_kp_k$,}
		\STATE{$d_{k}=d_{k-1}-\alpha_kt_k$,}
		\STATE{$r_{k}=A^\top d_{k}$,}
		\STATE{$\beta_{k}=\displaystyle \frac{r_{k}^\top r_{k}}{r_{k-1}^\top r_{k-1}}$,}
		\STATE{$p_{k+1}=r_{k}+\beta_{k}p_k$.}
		\ENDFOR
	\end{algorithmic}
\end{algorithm}

Thus, when solving system \cref{base}, we encounter the same problems as with the normal equations. Our aim is then to exploit the particular structure of \cref{base}, that is close to that of normal equations, to extend the successful algorithmic ingredients used in that case to design stable methods in our specific setting. 

\section{Proposed iterative solution methods}\label{sec_methods}
In this section, we introduce two solution methods for \cref{base}. They exploit specific reformulations of the original problem or approximations to it, to be able to factorize $A$ on both the left- and the right-hand sides. 

Having in mind the possibility of seeking an approximate solution, we especially focus on iterative methods, that are intended to be more stable than the standard CG method, later used as a reference method. We propose and describe the two methods, named CGLS$\epsilon$ and CGLS$I$, respectively. We first explain the ideas on which their designs are based and then present an extensive numerical evaluation in  \cref{sec_num}.

\subsection{CGLS$\epsilon$}

The first approach we propose is based on the solution of a least squares problem, with both matrix and right-hand side depending on an auxiliary real-valued strictly positive parameter $\epsilon$. The problem is defined in a way that its solution  $x_\epsilon$ approximates the true solution of \cref{base} $x$, when the parameter $\epsilon$ goes to zero. This reformulation is especially convenient because we can exploit the structure of the normal equations and solve the problem with CGLS, hence avoiding the issues described in  \cref{sec_error_analysis}.

Given $\epsilon>0$, let us then define
\begin{equation}\label{def_eps}
A_\epsilon=\begin{bmatrix}
A\\\epsilon~c^\top
\end{bmatrix},\quad b_\epsilon=\begin{bmatrix}
b\\\displaystyle 1/\epsilon
\end{bmatrix}.
\end{equation}
We then consider the following linear least squares problem:
\begin{equation}\label{sistema_eps}
\min_{x}\|A_\epsilon x-b_\epsilon\|^2.
\end{equation}
We remark that, due to the presence of $\epsilon$ in the right-hand side which is expected to be a small quantity, the residual will generally be really large. From standard perturbation theory on least squares problems, when the residual is large, the sensitivity of the system depends on $\kappa^2(A_\epsilon)\|A_\epsilon x_\epsilon-b_\epsilon\|/(\|A_\epsilon\|\|x_\epsilon\|)$. Hence, problem \cref{sistema_eps} would then generally be severely ill-conditioned.  However, the classical results are not so meaningful in this case, as the components of $b_\epsilon$ and $\Ae$ have different magnitudes, see Remark 1.4.3 in Bj\"orck \cite{bjorck1996numerical}.

A more meaningful analysis is proposed next in  \cref{teo_conditioning_eps}, whose proof can be easily derived analogously to that of \cref{teo_conditioning}. 

\begin{theorem}\label{teo_conditioning_eps}
	Let $F_{\epsilon}$ be the function that maps $A, b, c$ to the solution $x_\epsilon$ of \cref{sistema_eps}:
	\begin{align*}
	F_\epsilon: \, & \mathbb{R}^{m\times n}\times \mathbb{R}^m\times \mathbb{R}^n\rightarrow\mathbb{R}^n\\
	(A,b,c)\longmapsto& F_\epsilon(A,b,c)=(\Ae^\top\Ae)^{-1}(A^\top b+c),
	\end{align*}
	and let $r_\epsilon=b-Ax_\epsilon$.
	The absolute condition number of problem \cref{sistema_eps}, with Euclidean norm on the solution and Frobenius norm on the data, is then given by:
	\begin{align*}
	\|F'_\epsilon(A,b,c)\|_{\mathrm{op}}=\|&[(r_\epsilon^\top\otimes (\Ae^\top\Ae)^{-1} )L_T+x_\epsilon^\top\otimes (\Ae^\top\Ae)^{-1}A^\top,\\&(\Ae^\top\Ae)^{-1}A^\top,(1-2\epsilon c^\top x_\epsilon)(\Ae^\top\Ae)^{-1}]\|,
	\end{align*}
	where $L_T$ is the linear operator introduced in  \cref{lemma_cond}. 
	This condition number can be equivalently expressed as $\sqrt{\rule[0mm]{0mm}{3.15mm}\|\bar{M}_\epsilon\|}$, with
	\begin{align}\label{conditioning_eps}
	\bar{M}_\epsilon=&~((1-2\epsilon c^\top x_\epsilon)^2+\|r_\epsilon\|^2)(\Ae^\top\Ae)^{-2}+(1+\|x_\epsilon\|^2)(\Ae^\top\Ae)^{-1}A^\top A(\Ae^\top\Ae)^{-1}\nonumber\\&-2~\mathrm{sym}(B_\epsilon)
	\end{align}
	with $B_\epsilon=(\Ae^\top\Ae)^{-1}A^\top r_\epsilon x_\epsilon^\top(\Ae^\top\Ae)^{-1}$ and $\mathrm{sym}(B_\epsilon)=\frac{1}{2}(B_\epsilon+B_\epsilon^\top)$.
\end{theorem}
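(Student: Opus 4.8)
The plan is to mimic exactly the derivation of \cref{teo_conditioning}, treating \cref{sistema_eps} as a parametrized least squares problem whose normal equations are $A_\epsilon^\top A_\epsilon x_\epsilon = A_\epsilon^\top b_\epsilon$. First I would observe that, by the block structure in \cref{def_eps}, $A_\epsilon^\top b_\epsilon = A^\top b + \epsilon c^\top \cdot (1/\epsilon) = A^\top b + c$, which is why $F_\epsilon(A,b,c) = (A_\epsilon^\top A_\epsilon)^{-1}(A^\top b + c)$, and also $A_\epsilon^\top A_\epsilon = A^\top A + \epsilon^2 c c^\top$. The key point is that the \emph{data} of the problem are still $(A,b,c)$ — not $(A_\epsilon, b_\epsilon)$ — so perturbations $E \in \mathbb{R}^{m\times n}$, $f\in\mathbb{R}^m$, $g\in\mathbb{R}^n$ act on $A,b,c$ and propagate into $A_\epsilon = A_\epsilon(E)$ and $b_\epsilon = b_\epsilon(g)$ through the same block formulas. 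So I would differentiate $F_\epsilon$ directly: write $\tilde x_\epsilon = x_\epsilon + \delta x_\epsilon$ as the solution of the perturbed normal equations, expand $(A+E)_\epsilon^\top (A+E)_\epsilon = A_\epsilon^\top A_\epsilon + A^\top E + E^\top A + O(\|E\|^2)$ (note the cross term $\epsilon^2 c g^\top + \epsilon^2 g c^\top$ from perturbing $c$ also appears but, crucially, drops out of the first-order residual term because it multiplies $x_\epsilon$ and recombines — I should check this carefully), and the right-hand side becomes $A^\top b + c + E^\top b + A^\top f + g$. Solving for $\delta x_\epsilon$ to first order, using $(A_\epsilon^\top A_\epsilon)^{-1}A^\top b + (A_\epsilon^\top A_\epsilon)^{-1}c = x_\epsilon$ and $r_\epsilon = b - A x_\epsilon$, I expect
\begin{equation*}
\delta x_\epsilon = (A_\epsilon^\top A_\epsilon)^{-1}E^\top r_\epsilon - (A_\epsilon^\top A_\epsilon)^{-1}A^\top E\, x_\epsilon + (A_\epsilon^\top A_\epsilon)^{-1}A^\top f + (1-2\epsilon c^\top x_\epsilon)(A_\epsilon^\top A_\epsilon)^{-1} g,
\end{equation*}
the factor $(1-2\epsilon c^\top x_\epsilon)$ in front of $g$ coming from differentiating both the explicit $c$ in $A^\top b + c$ and the implicit $c$ inside $A_\epsilon^\top A_\epsilon = A^\top A + \epsilon^2 cc^\top$ (this is the one genuinely new bookkeeping item compared to \cref{teo_conditioning}).

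Having identified $F_\epsilon'(A,b,c)(E,f,g)$, the remainder is purely the same linear-algebra packaging as in \cref{teo_conditioning}. I would introduce the matrix $M_\epsilon = [(r_\epsilon^\top \otimes (A_\epsilon^\top A_\epsilon)^{-1})L_T + x_\epsilon^\top \otimes (A_\epsilon^\top A_\epsilon)^{-1}A^\top,\ (A_\epsilon^\top A_\epsilon)^{-1}A^\top,\ (1-2\epsilon c^\top x_\epsilon)(A_\epsilon^\top A_\epsilon)^{-1}]$ using $\mathrm{vec}$ and the permutation $L_T$ (using $\mathrm{vec}(E^\top) = L_T \mathrm{vec}(E)$ exactly as in \cref{lemma_cond}), so that $\|F_\epsilon'\|_{\mathrm{op}} = \|M_\epsilon\| = \|M_\epsilon^\top\| = \sup_{y\neq 0}\|M_\epsilon^\top y\|/\|y\|$. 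Then I would form $w = M_\epsilon^\top y$ componentwise as in the proof of \cref{teo_conditioning}: the $\mathrm{vec}(E)$-block of $w$ is $\mathrm{vec}(S_\epsilon)$ with $(S_\epsilon)_{i,j} = r_\epsilon^\top e_i e_j^\top (A_\epsilon^\top A_\epsilon)^{-1}y - y^\top (A_\epsilon^\top A_\epsilon)^{-1}A^\top e_i e_j^\top x_\epsilon$, and summing squares gives
\begin{equation*}
\|\mathrm{vec}(S_\epsilon)\|^2 = \|r_\epsilon\|^2\|(A_\epsilon^\top A_\epsilon)^{-1}y\|^2 + \|x_\epsilon\|^2\|A(A_\epsilon^\top A_\epsilon)^{-1}y\|^2 - y^\top\big(B_\epsilon + B_\epsilon^\top\big)y,
\end{equation*}
with $B_\epsilon = (A_\epsilon^\top A_\epsilon)^{-1}A^\top r_\epsilon x_\epsilon^\top (A_\epsilon^\top A_\epsilon)^{-1}$. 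Adding the contributions $\|y^\top(A_\epsilon^\top A_\epsilon)^{-1}A^\top\|^2 = y^\top (A_\epsilon^\top A_\epsilon)^{-1}A^\top A (A_\epsilon^\top A_\epsilon)^{-1}y$ from the $f$-block and $(1-2\epsilon c^\top x_\epsilon)^2\|(A_\epsilon^\top A_\epsilon)^{-1}y\|^2$ from the $g$-block, and noting $\|x_\epsilon\|^2\|A(A_\epsilon^\top A_\epsilon)^{-1}y\|^2 = \|x_\epsilon\|^2 y^\top(A_\epsilon^\top A_\epsilon)^{-1}A^\top A(A_\epsilon^\top A_\epsilon)^{-1}y$, collect everything into $\|w\|^2 = y^\top \bar M_\epsilon y$ with $\bar M_\epsilon$ exactly as in \cref{conditioning_eps}. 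The conclusion $\|F_\epsilon'\|_{\mathrm{op}} = \sqrt{\|\bar M_\epsilon\|}$ then follows since $\bar M_\epsilon$ is symmetric positive semidefinite and $\sup_{y\neq0} y^\top \bar M_\epsilon y/\|y\|^2 = \|\bar M_\epsilon\|$.

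The main obstacle I anticipate is the first-order expansion of $(A+E)_\epsilon^\top(A+E)_\epsilon$ together with the perturbed right-hand side: one must be careful that $A_\epsilon$ depends on $E$ only through its top block (so $(A+E)_\epsilon^\top(A+E)_\epsilon = (A+E)^\top(A+E) + \epsilon^2 cc^\top$, and the $\epsilon^2$ part is \emph{not} perturbed by $E$ — only by $g$), while $b_\epsilon$ does not depend on $f$ in its bottom entry (which is the fixed $1/\epsilon$), and that the $g$-perturbation hits $c$ in three places: inside $A_\epsilon^\top A_\epsilon$, in the cross term $\epsilon^2(cg^\top + gc^\top)$, and in the explicit $+c$ on the right. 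Tracking these consistently is what produces the coefficient $(1-2\epsilon c^\top x_\epsilon)$ rather than a bare $1$; getting that factor right is the one place where the argument genuinely differs from \cref{teo_conditioning}, and a sign or factor-of-two slip there is the likely pitfall. Everything downstream is a verbatim transcription of the bookkeeping already carried out in the proof of \cref{teo_conditioning}, so I would simply say "the proof proceeds exactly as that of \cref{teo_conditioning}, with $A$ replaced by $A_\epsilon$ where it appears as the system matrix and with the extra factor $(1-2\epsilon c^\top x_\epsilon)$ in the $c$-derivative," and only spell out the computation of $F_\epsilon'$ and the resulting $\bar M_\epsilon$.
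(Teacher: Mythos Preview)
Your proposal is correct and follows exactly the approach the paper indicates: the paper gives no detailed proof but simply states that the result ``can be easily derived analogously to that of \cref{teo_conditioning},'' which is precisely the template you execute. You also correctly isolate the one genuinely new bookkeeping item---tracking the $c$-derivative through both the system matrix $A_\epsilon^\top A_\epsilon = A^\top A + \epsilon^2 cc^\top$ and the right-hand side to produce the extra scalar factor in the $g$-block---while the rest of your argument (the $\mathrm{vec}/L_T$ packaging, the computation of $\|M_\epsilon^\top y\|^2$, and the identification of $\bar M_\epsilon$) is a verbatim transcription of the proof of \cref{teo_conditioning}.
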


It is essential to notice that from \cref{teo_conditioning_eps}, the conditioning of \cref{sistema_eps} does not depend on $\|b_\epsilon-A_\epsilon x_\epsilon\|$, that will be really large, but rather on $\|r_\epsilon\|=\|b-Ax_\epsilon\|$, that will be indeed much smaller. The normal equations for \cref{sistema_eps} are 
\begin{equation}\label{syst_eps}
(A^\top A+\epsilon^2 cc^\top)x=A^\top b+c,
\end{equation}
that is, if $\epsilon$ goes to zero, the matrix of \cref{syst_eps} is close to that of the normal equations \cref{base}. We can then prove that the solution of \cref{syst_eps} approximates a solution of \cref{base} for small $\epsilon$.
\begin{lemma}\label{lemma_eps}
	Let $\xe$ be the solution of \cref{syst_eps} and $x$ be the solution of \cref{base}. Then, $\displaystyle{\lim_{\epsilon\rightarrow 0}\xe=x}$ and the relative norm of the error satisfies
	\begin{equation}\label{error_estim_eps}
	\frac{\|\xe-x\|}{\|x\|}\leq\epsilon^2\frac{\|c\|\|w\|}{1+\epsilon^2c^\top w}, \quad  w=(A^\top A)^{-1}c.
	\end{equation} 
\end{lemma}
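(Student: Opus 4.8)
The plan is to reduce everything to a single application of the Sherman--Morrison formula. First I would observe that since $A$ has full column rank, $A^\top A$ is symmetric positive definite, and $\epsilon^2 cc^\top$ is symmetric positive semidefinite; hence $A^\top A+\epsilon^2 cc^\top$ is symmetric positive definite and in particular invertible, so $\xe$ is well defined. Moreover $1+\epsilon^2 c^\top w = 1+\epsilon^2 c^\top (A^\top A)^{-1}c \ge 1 > 0$ because $(A^\top A)^{-1}$ is positive definite, so the Sherman--Morrison update is applicable with no division-by-zero concern.

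Next I would apply Sherman--Morrison to $(A^\top A+\epsilon^2 cc^\top)^{-1}$, writing
\begin{equation*}
(A^\top A+\epsilon^2 cc^\top)^{-1}=(A^\top A)^{-1}-\frac{\epsilon^2 (A^\top A)^{-1}cc^\top(A^\top A)^{-1}}{1+\epsilon^2 c^\top(A^\top A)^{-1}c}.
\end{equation*}
Multiplying both sides on the right by $A^\top b+c$ and using $x=(A^\top A)^{-1}(A^\top b+c)$, $w=(A^\top A)^{-1}c$, together with $c^\top(A^\top A)^{-1}(A^\top b+c)=c^\top x$ and $c^\top(A^\top A)^{-1}c=c^\top w$, yields the closed form
\begin{equation*}
\xe = x-\frac{\epsilon^2 (c^\top x)\,w}{1+\epsilon^2 c^\top w},
\qquad\text{i.e.}\qquad
\xe-x=-\frac{\epsilon^2 (c^\top x)\,w}{1+\epsilon^2 c^\top w}.
\end{equation*}

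Finally I would take norms, apply the Cauchy--Schwarz inequality $\lvert c^\top x\rvert\le\|c\|\,\|x\|$ in the numerator, and divide by $\|x\|$ (the case $x=0$ being trivial since then $c^\top x=0$ and $\xe=x$) to obtain the stated bound
$\|\xe-x\|/\|x\|\le \epsilon^2\|c\|\|w\|/(1+\epsilon^2 c^\top w)$. The limit $\xe\to x$ as $\epsilon\to0$ then follows immediately, since the right-hand side is bounded above by $\epsilon^2\|c\|\|w\|\to0$ (the denominator being at least $1$). There is essentially no hard step here; the only points requiring a word of care are justifying invertibility of $A^\top A+\epsilon^2 cc^\top$ and the strict positivity of $1+\epsilon^2 c^\top w$ so that Sherman--Morrison is legitimate.
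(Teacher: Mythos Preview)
Your proof is correct and follows essentially the same approach as the paper: apply the Sherman--Morrison formula to the rank-one perturbation $A^\top A+\epsilon^2 cc^\top$, derive the closed form $\xe=x-\dfrac{\epsilon^2(c^\top x)}{1+\epsilon^2 c^\top w}\,w$, and bound the relative error using $\lvert c^\top x\rvert\le\|c\|\,\|x\|$. Your write-up is in fact slightly more careful than the paper's, as you explicitly justify the invertibility of the perturbed matrix, the positivity of $1+\epsilon^2 c^\top w$, and the degenerate case $x=0$.
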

\begin{proof}
	$A^\top A+\epsilon^2cc^\top$ is obtained as a rank one correction of $A^\top A$. Thus the application of the Sherman-Morrison formula \cite[\S 3.1.2]{bjorck1996numerical} leads to:
	\begin{equation*}
	\xe=x-\frac{\epsilon^{2}c^\top x}{1+\epsilon^{2}c^\top w}w,\quad  w=(A^\top A)^{-1}c.
	\end{equation*}
	Then we obtain
	\begin{equation*}
	\frac{\|\xe-x\|}{\|x\|}\leq\epsilon^2\frac{\|c\|\|w\|}{1+\epsilon^2c^\top w},
	\end{equation*}
	which completes the proof.
\end{proof}

This result is helpful to propose a practical choice for $\epsilon$, that can be related to $\kappa(A)$. Note however that, to have guarantee of obtaining an accurate solution, a relatively small $\epsilon$ may be needed, depending on $\|w\|$. One may be then concerned that a really small $\epsilon$ may cause large errors in finite arithmetic, but we can show that an arbitrarily small value can indeed be used. A perturbed solution $\tilde{x}_\epsilon=x_\epsilon+\delta x_\epsilon$ will be such that:
\begin{equation*}
(\Ae^\top\Ae)(\delta x_\epsilon)=\delta(\Ae^\top\be).
\end{equation*} 
We need little care in evaluating $\delta(\Ae^\top\be)$, as the standard bound
\begin{equation*}
\lvert\delta(\Ae^\top\be)\rvert\leq\gamma_{m+1}\lvert\Ae^\top\lvert\rvert\be\rvert
\end{equation*}  
will largely overestimate it, due to the possibly large norm of $b_\epsilon$. Indeed, in practical computations, the large last component of $b_\epsilon$ is compensated by the multiplication with the small entries in the last line of $A_\epsilon$. We consider that
\begin{equation*}
\mathrm{fl}(\Ae^\top\be)=\mathrm{fl}(A^\top b)+\mathrm{fl}\left(\epsilon c\frac{1}{\epsilon}\right)+\delta_s,
\end{equation*}  with $\delta_s$ an additional error due to the summation. 
We remark that if we assume $\epsilon$ to represent a machine number, i.e. $\epsilon=2^i$ for $i\in\mathbb{Z}$, then $\mathrm{fl}\left(\epsilon~c~\displaystyle \frac{1}{\epsilon}\right)=c$, the computation is performed exactly, as it just amounts of a shift in the exponent. Then,
\begin{equation*}
\mathrm{fl}\left(\Ae^\top\be\right)=A^\top b+c+\delta_p+\delta_s, \;\; \text{ with }\;\; \lvert\delta_s\rvert\leq u\lvert A^\top b+c\rvert+O(u^2),\; \lvert\delta_p\rvert\leq\gamma_m \lvert A\rvert\lvert b\rvert,
\end{equation*} 
and the bound does not depend on $\epsilon$.
\vskip5pt
The first approach we propose, called CGLS$\epsilon$, is based on the solution of \cref{sistema_eps} with the CGLS method. From  \cref{lemma_eps}, if we choose a small value for $\epsilon$, we can expect the computed solution to be an accurate approximation to $x$. We will later discuss the choice of this free parameter in  \cref{sec_num_eps}. 

\subsection{CGLS$I$}
The second method we propose is still based on the solution of an augmented system. In this case, we do not employ any auxiliary parameter.  We rather use an auxiliary matrix $\hat{I} \in \mathbb{R}^{(m+1) \times (m+1)}$ and we define $\hat{A} \in \mathbb{R}^{(m+1) \times n}$ and $\hat{b} \in \mathbb{R}^{m+1}$, accordingly:
\begin{equation*}
\hat{A}=\begin{bmatrix}
A\\c^\top
\end{bmatrix}, \qquad \hat{I}= \left \lbrack \begin{array}{cc}
I_m & 0 \\
0   & 0 \\
\end{array} \right \rbrack ,\qquad \hat{b}=\begin{bmatrix}
b\\1
\end{bmatrix}.
\end{equation*}
We then reformulate \cref{base} as:
\begin{equation*}
\hat{A}^\top\hat{I}\hat{A}x=\hat{A}^\top\hat{b},
\end{equation*}
the two systems being equivalent. Although this reformulation is not a least squares problem, it still offers the advantage to factorize $\hat{A}^\top$ in both the right- and the left-hand sides. Hence, we can use a Krylov subspace method as a solution method, that can be implemented with no need of forming matrix $\hat{A}^\top\hat{I}\hat{A}$ explicitly or of recurring the residual $r=\hat{A}^\top(\hat{I}\hat{A}x-\hat{b})$. We can rather simply update $\hat{d}=\hat{I}\hat{A}x-\hat{b}$ along the iterations and form $r$ by multiplication with $\hat{A}^\top$. This allows to avoid corruption of $b$ by multiplication with $A^\top$ at the beginning of the process, without possibility of amending for it. We describe the whole procedure in  \cref{algo_i}. 

\begin{algorithm}
	\caption{CGLS$I$ for $A^\top Ax=A^\top b+c$}
	\label{algo_i}
	\begin{algorithmic}
	\STATE{ Input: $\hat{A}$, $\hat{b}$, $x_0$}
	\STATE{Define  $\hat{d}_0=\hat{b}-\hat{A}x_0$, $r_0=\hat{A}^\top(\hat{b}-\hat{A}x_0)$, $p_1=r_0$.}
	\FOR{$k=1,2,\dots$}
		\STATE{ $\hat{t}_{k}=\hat{I}\hat{A}p_{k}$},
		\STATE{ $\alpha_k=\displaystyle \frac{r_{k-1}^\top r_{k-1}}{\hat{t}_k^\top \hat{t}_k}$},
		\STATE{$x_{k}=x_{k-1}+\alpha_kp_k$},
		\STATE{$\hat{d}_{k}=\hat{d}_{k-1}-\alpha_k \hat{t}_k$},
		\STATE{$r_{k}=\hat{A}^\top \hat{d}_{k}$},
		\STATE{$\beta_{k}=\displaystyle \frac{r_{k}^\top r_{k}}{r_{k-1}^\top r_{k-1}}$},
		\STATE{$p_{k+1}=r_{k}+\beta_{k}p_k$}.
		\ENDFOR
		\end{algorithmic}
\end{algorithm}

In exact arithmetic the method in  \cref{algo_i} is equivalent to CG applied to \cref{base}.  The practical implementation of CGLS$I$ just relies on the use of augmented matrices together with two expedients to make the method more stable: the computation of  $p_k^\top A^\top A p_k$ as $\|\hat{I}\hat{A}p_k\|^2$ and the update of the residual. We can therefore expect the same benefits of CGLS as compared to CG.

\vskip 5pt

\begin{remark}\label{remark_bjork}
	 \cref{lemma_bjork} also applies to the CGLS$I$ method with $A$ replaced with $\hat{I}\hat{A}$. We can then deduce that, if condition \cref{cond_bjork} holds, the bound \cref{bound_error} is also valid and CGLS$I$ will provide numerical solutions to \cref{base} in a stable way. As pointed out in \cite{bjorck1998stability}, proving condition \cref{cond_bjork} is not an easy task. Nevertheless, we have found that this condition is shown to be satisfied numerically after extensive numerical experimentations, see  \cref{sec_num}. We also point out that the condition number of $\hat{A}$ can be related to that of $A$ by \cite[Corollary 7.3.6]{horn2012matrix}. We finally notice that \cref{lemma_bjork} also applies to CGLS$\epsilon$. Nevertheless it is not so meaningful, since in \cref{rel_res} the quantity $\|A_\epsilon x_\epsilon-b_\epsilon\|$ would be rather large. 
\end{remark}


\section{First order approximation for the forward error}\label{sec_forward}
The formulas we have derived in  \cref{teo_conditioning} and  \cref{teo_conditioning_eps} for the structured condition number of problems \cref{base} and \cref{sistema_eps} can be used to  provide a first order estimate of the forward error on a solution obtained by a method that does not form the matrix $A^\top A$ or $A_\epsilon^\top A_\epsilon$ explicitly. We define these estimates as the product of the relative condition number (see \cref{def_cond}) and the linearized estimate of the backward error in \cref{lemma_linearization}.

For CGLS$I$, we use the relative condition number
\begin{equation}\label{rel_cond}
\frac{\sqrt{\rule[0mm]{0mm}{3.15mm}\|\bar{M}\|}\|[A,b,c]\|_F}{\|x\|}
\end{equation} 
of \cref{base} for perturbations on $A$, $b$, $c$, with $\bar{M}$ defined in \cref{conditioning}. 

For CGLS$\epsilon$, the error on the computed solution $\hat{x}_\epsilon$ depends on two terms, the distance of the computed solution to the exact solution of \cref{syst_eps} and the distance of the exact solution of \cref{syst_eps} to the exact solution of \cref{base}:
\begin{equation}\label{err_eps}
\frac{\|x-\hat{x}_\epsilon\|}{\|x\|}\leq \frac{\|x-x_\epsilon\|}{\|x\|}+\frac{\|x_\epsilon-\hat{x}_\epsilon\|}{\|x\|}=
\frac{\|x-x_\epsilon\|}{\|x\|}+\frac{\|x_\epsilon-\hat{x}_\epsilon\|}{\|x_\epsilon\|}\frac{\|x_\epsilon\|}{\|x\|}.
\end{equation}

The first term and the ratio $\displaystyle \frac{\|x_\epsilon\|}{\|x\|}$ can be bounded  by the Sherman-Morrison formula, see \cref{error_estim_eps}, while the second one by using the relative condition number
\begin{equation}\label{rel_cond_eps}
\frac{\sqrt{\rule[0mm]{0mm}{3.15mm}\|\bar{M}_\epsilon\|}\|[A,b,c]\|_F}{\|x\|}
\end{equation} 
obtained in \cref{teo_conditioning_eps}, with $\bar{M}_\epsilon$ defined  in \cref{conditioning_eps}.

For CG, we can use the same bound as for CGLS$I$, adding the initial rounding error \cref{bound} due to the computation of the right-hand side.

\vskip 5pt

Given the computed solution $\hat{x}$, we then define the following error estimates: 
\begin{subequations}\label{cond}
	\begin{align}
	& \hat{E}_{CGLSI}:=\frac{\sqrt{\rule[0mm]{0mm}{3.15mm}\|\bar{M}\|}\|[A,b,c]\|_F}{\|\hat{x}\|}\bar{\eta}(\hat{x}),\\
	& \hat{E}_{CGLS_\epsilon}:= \epsilon^2\frac{\|c\|\|w\|}{1+\epsilon^2c^\top w}+\frac{\sqrt{\rule[0mm]{0mm}{3.15mm}\|\bar{M}_\epsilon\|}\|[A,b,c]\|_F}{\|\hat{x}\|}\bar{\eta}_\epsilon(\hat{x}_\epsilon) \Big\|I_n-\frac{\epsilon^2wc^\top}{1+\epsilon^2c^\top w}\Big\|,\\
	& \hat{E}_{CG}:=\frac{\sqrt{\rule[0mm]{0mm}{3.15mm}\|\bar{M}\|}\|[A,b,c]\|_F}{\|\hat{x}\|}\bar{\eta}(\hat{x})+ \kappa^2(A)\bar{\eta}(\hat{x}) \left(\frac{m+1}{1-(m+1)u}\frac{\|b\|}{\|A\|}+\frac{\|c\|}{\|A\|^2}\right),\\
	\end{align}
\end{subequations}
$\bar{\eta}(\hat{x}),\bar{\eta}_\epsilon(\hat{x}_\epsilon)$ being the linearized estimates of the backward error of the corresponding problem.

We will show in  \cref{sec_num_bounds} that the proposed estimates nicely predict the actual error in the numerical simulations and that in all cases they provide upper bounds for the forward error. 

\section{Numerical experiments}\label{sec_num}

We numerically validate the performance of the methods presented in  
\cref{sec_methods}. We discuss their practical implementation and evaluate the first order estimates of the forward error based on the relative condition numbers derived in  \cref{sec_forward}. To provide a fair comparison, we also consider CG, MINRES (applied to \cref{augmented}) and standard direct methods.  We show that CGLS$I$ and CGLS$\epsilon$ perform better than CG and that they can compare with direct methods, in terms of solution accuracy. 

\subsection{Problem definition and methodology}

All the numerical methods have been implemented in Matlab. For CGLS$\epsilon$\footnote{http://web.stanford.edu/group/SOL/software/cgls/}, CG\footnote{http://www4.ncsu.edu/eos/users/c/ctkelley/www/roots/pcgsol.m} and MINRES\footnote{https://web.stanford.edu/group/SOL/software/minres/}, Matlab codes available online have been employed. For CG, the computation $A^\top A$ is avoided and the products of $A^\top A$ times a vector are computed by successive multiplication by $A$ and its transpose. We consider linear systems of the form \cref{base}, where the matrix $A\in \mathbb{R}^{m\times n}$ has been obtained as $A=U\Sigma V^\top$, where $U$ and $V$ have been selected as orthogonal matrices generated with the Matlab commands {\sf{gallery('orthog',m,j)}}, {\sf{gallery('orthog',n,j)}}\footnote{https://www.mathworks.com/help/matlab/figure/ref/gallery.html}, respectively, for different choices of $j=1,\dots,6$. We consider two choices for the diagonal elements of $\Sigma$:
\begin{itemize}
	\item[C1]: $\Sigma_{ii} = a^{-i}$, for $a>0$,
	\item[C2]: $\Sigma_{ii} = u_i$, where $u \in \mathbb{R}^n$ is generated with the  {\sf{linspace}} Matlab command i.e. $u=\sf{linspace}(\sf{dw,up},n)$, with $\sf{dw,up}$ being strictly positive real values,
\end{itemize}
for $i=1,\dots,n$. 

The numerical tests are intended to show specific properties of the method. We therefore consider matrices of  relatively small dimensions ($m=40$ and $n=20$), in order to avoid too ill-conditioned problems \cite{bjorck1998stability}.
For all the performance profiles reported in the following, we consider a set of $40$ matrices of slightly larger dimension ($m=100$, $n=50$), that we later call $\mathcal{P}$, to also test the robustness of the methods (we however do not focus here with issues related to large scale problems).  This set is composed of selected matrices from the {\sf{gallery}} Matlab command (those with condition number still lower than $10^{10}$), and synthetic matrices corresponding to both choices C1 and C2 (of size $100\times 50$ rather than $40\times 20$). The condition number of the matrices is found to be between $1$ and $10^{10}$. The optimality measure considered is the relative solution accuracy $\displaystyle \frac{\|x-\hat{x}\|}{\|x\|}$, with $x$ the exact solution (chosen to be $x=[n-1,n-2,\dots,0]^\top$) and $\hat{x}$ the computed numerical solution (we omit the subscript $\epsilon$ for CGLS$\epsilon$). In the tests $c$ is  freely chosen and $b=Ax-{A^\dagger}^\top  c$. A simulation is considered unsuccessful if the relative solution accuracy is larger than $10^{-2}$. 

We first focus on iterative methods. Before performing a detailed comparison, we address a practical question in CGLS$\epsilon$: how to choose the parameter $\epsilon$ ?  

\subsection{How to choose $\epsilon$ in CGLS$\epsilon$?}\label{sec_num_eps}
In this section we exclusively focus on CGLS$\epsilon$ and on the choice of parameter $\epsilon$. 

In exact arithmetic, when $\epsilon$ goes to zero, applying CGLS to \cref{syst_eps} is equivalent to applying CG to \cref{base}.  However, in practice a fixed value for $\epsilon$ has to be chosen. This choice should minimize the error on the computed solution $\hat{x}_\epsilon$, which depends on two terms, the distance of the computed solution to the exact solution of \cref{syst_eps} and the distance of the exact solution of \cref{syst_eps} to the exact solution of \cref{base}, as it is shown in relation \cref{err_eps}.

We plot the estimates of these two terms as a function of $\epsilon$ in \cref{fig:eps1}, for the test corresponding to $c=\sf{rand}(n,1)$ and choice C2 with $\sf{dw}=10^{-7}$, $\sf{up}=0.1$. Specifically, on  \cref{fig:eps1} left we plot the estimate \cref{error_estim_eps} (solid line) of $\displaystyle \frac{\|x-\xe\|}{\|x\|}$ for different values of $\epsilon$ ($\epsilon=2^{-27}\approx 10^{-8}, 2^{-34}\approx 10^{-10},2^{-40}\approx 10^{-12},2^{-47}\approx 10^{-14},2^{-54}\approx 10^{-16}$). On  \cref{fig:eps1}  right we plot the relative condition number \cref{rel_cond_eps}, solid line.

With decreasing $\epsilon$, the solution of \cref{syst_eps} approximates increasingly better that of \cref{base}, and the relative condition number improves and converges to that of \cref{base} in \cref{rel_cond} (dotted line in the right part of \cref{fig:eps1}).  A small value of $\epsilon$ will then make both error terms small. 

\vskip 5pt
Special attention should however be paid when the norm of the right-hand side is large. Indeed, in this case the value of $\epsilon$ that is necessary to let the method converge to a solution of \cref{base}, may be really small. 
Let us explain why. 

Let us assume to apply  \cref{algo_cg} to \cref{base} and  \cref{algo_cgls} to \cref{syst_eps}. We would respectively compute:
\begin{align*}
\alpha_1=\frac{\|r_0\|^2}{p_1^\top A^\top Ap_1}=\frac{\|A^\top b+c\|^2}{\|A(A^\top b+c)\|^2}, &&
x_1=\alpha_1(A^\top b+c)=\alpha_1p_1,
\end{align*}
and
\begin{align*}
\alpha_1(\epsilon)=\frac{\|A^\top b+c\|^2}{\|A(A^\top b+c)\|^2+\epsilon\|c^\top(A^\top b+c)\|}, &&
x_1(\epsilon)=\alpha_1(\epsilon)p_1(\epsilon)=\alpha_1(\epsilon)p_1.
\end{align*}

Notice that if $\epsilon$ tends to zero, so does the term $\epsilon\|c^\top(A^\top b+c)\|$ in the denominator of $\alpha_1(\epsilon)$. Consequently $\alpha_1(\epsilon)$ tends toward $\alpha_1$ and $x_1(\epsilon)$ tends toward $x_1$. 
If $\epsilon$ has to be fixed, its value should be small enough to let $\epsilon\|c^\top(A^\top b+c)\|$ be small compared to $\|A(A^\top b+c)\|^2$, otherwise the found approximation will be close to a solution of \cref{syst_eps} rather than to one of \cref{base}.  This choice is then  particularly difficult when $\|A^\top b+c\|$ is large.

We show this phenomenon on a test case reported in  \cref{fig:test_eps}. We take choice C1 with $a=1.3$. It holds $\kappa(A)=10^2$. We first choose $c=10^{-2}\,\sf{rand}(n,1)$ (\cref{fig:test_eps} left) and then $c=10^{2}\,\sf{rand}(n,1)$ (\cref{fig:test_eps} right).  We report the relative error ${\|x-\hat{x}\|}/{\|x\|}$ versus the number of iterations $k$.  The related system has been solved by CGLS$\epsilon$,  for two different values of $\epsilon\approx10^{-12}, 10^{-14}$. We can remark that in both cases the lower value of the parameter allows for a lower relative error but especially in the second test the difference is really important. This is due to the fact that in the second test the norm of $c$ is much larger than in the first test, so that a smaller value of $\epsilon$ would be needed to let the term $\epsilon\|c^\top(A^\top b+c)\|$ be negligible with respect to $\|A(A^\top b+c)\|$.

Finally, in  \cref{fig:eps2}, we report a performance profile \cite{perf_prof}, built on the set of matrices $\mathcal{P}$. The performance of the method improves as $\epsilon$ becomes smaller, both in terms of robustness and accuracy, with the best corresponding to $\epsilon\approx 10^{-14}$, as shown in  \cref{fig:eps2}. Lower values of $\epsilon$ do not improve the performance of the method.

\vskip 5pt

{\bf Conclusion:} A small value of $\epsilon$ is found to guarantee the best performance, and ensures CGLS$\epsilon$ to be less sensible to right-hand sides with large Euclidean norm.  Then, later we choose $\epsilon=2^{-47}\approx 10^{-14}$.

\begin{figure}
	\centering
	\includegraphics[width=0.49\linewidth,height=0.35\linewidth]{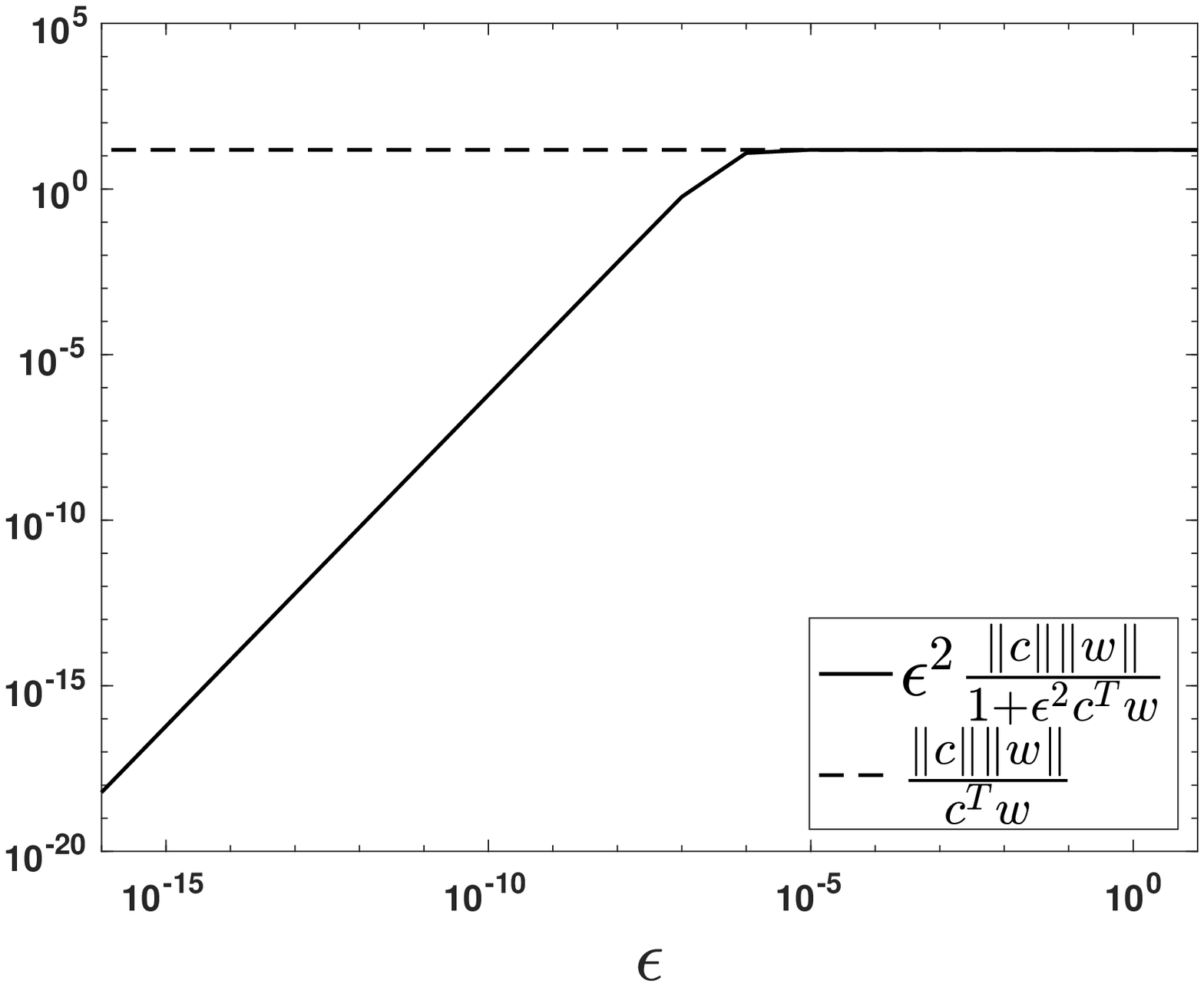}
	\includegraphics[width=0.49\linewidth,height=0.35\linewidth]{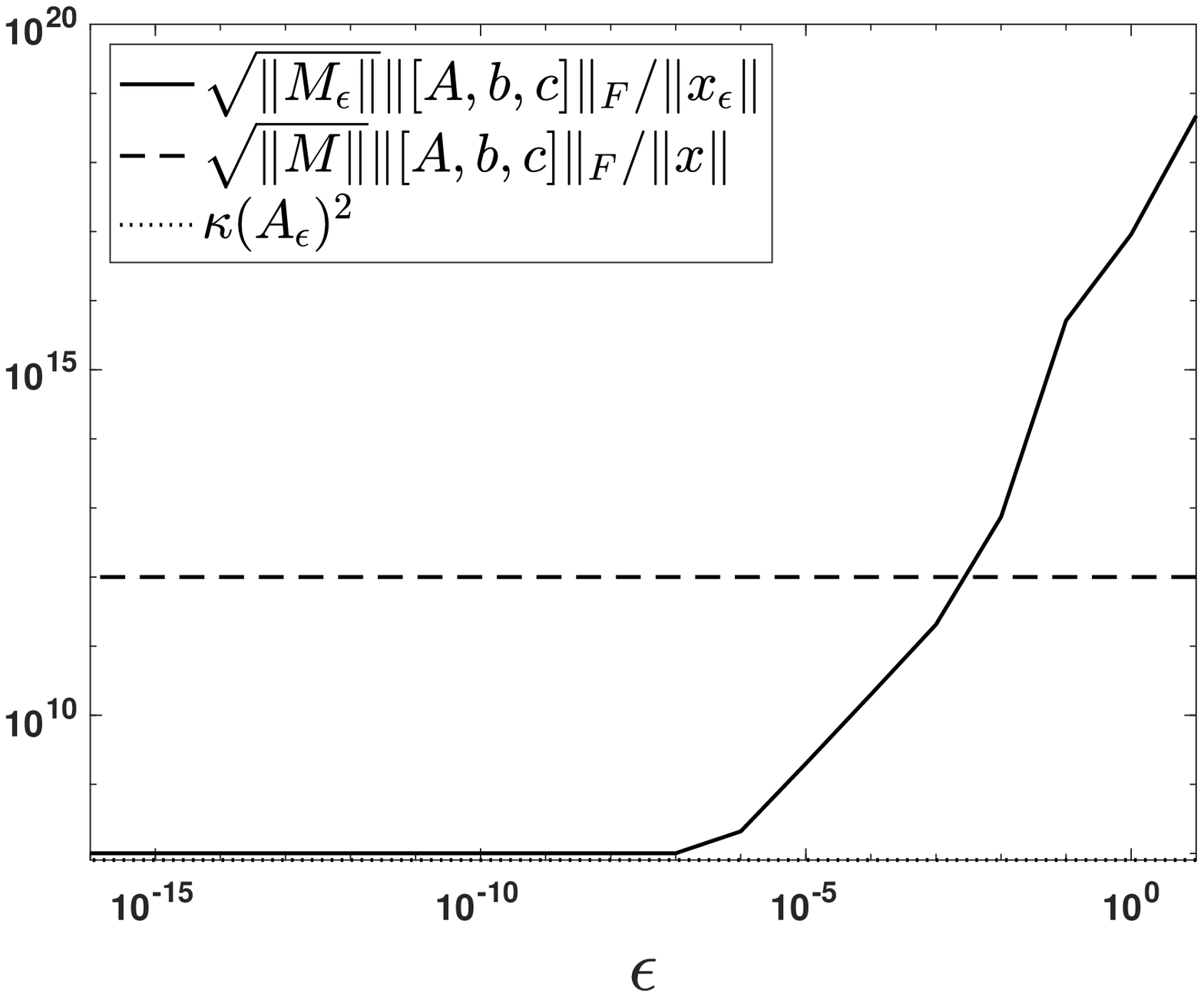}
	\caption{CGLS$\epsilon$. Left part: Estimate \cref{error_estim_eps} of $ \|x_\epsilon-x\|/\|x\|$ for different values of $\epsilon$ (solid line) and limit value for $\epsilon\rightarrow\infty$ (dashed line), $w=(A^\top A)^{-1}c$. Right part: relative condition number \cref{rel_cond_eps}  for different values of $\epsilon$ (solid line) and  relative condition number for $\epsilon=0$, from \cref{rel_cond} (dotted line). Notice that for small values of $\epsilon$ the structured condition number of the problem is far less than $\kappa(A_\epsilon)^2$ (dashed line).}
	\label{fig:eps1}
\end{figure}

\begin{figure}
	\centering
		\includegraphics[width=0.49\linewidth,height=0.35\linewidth]{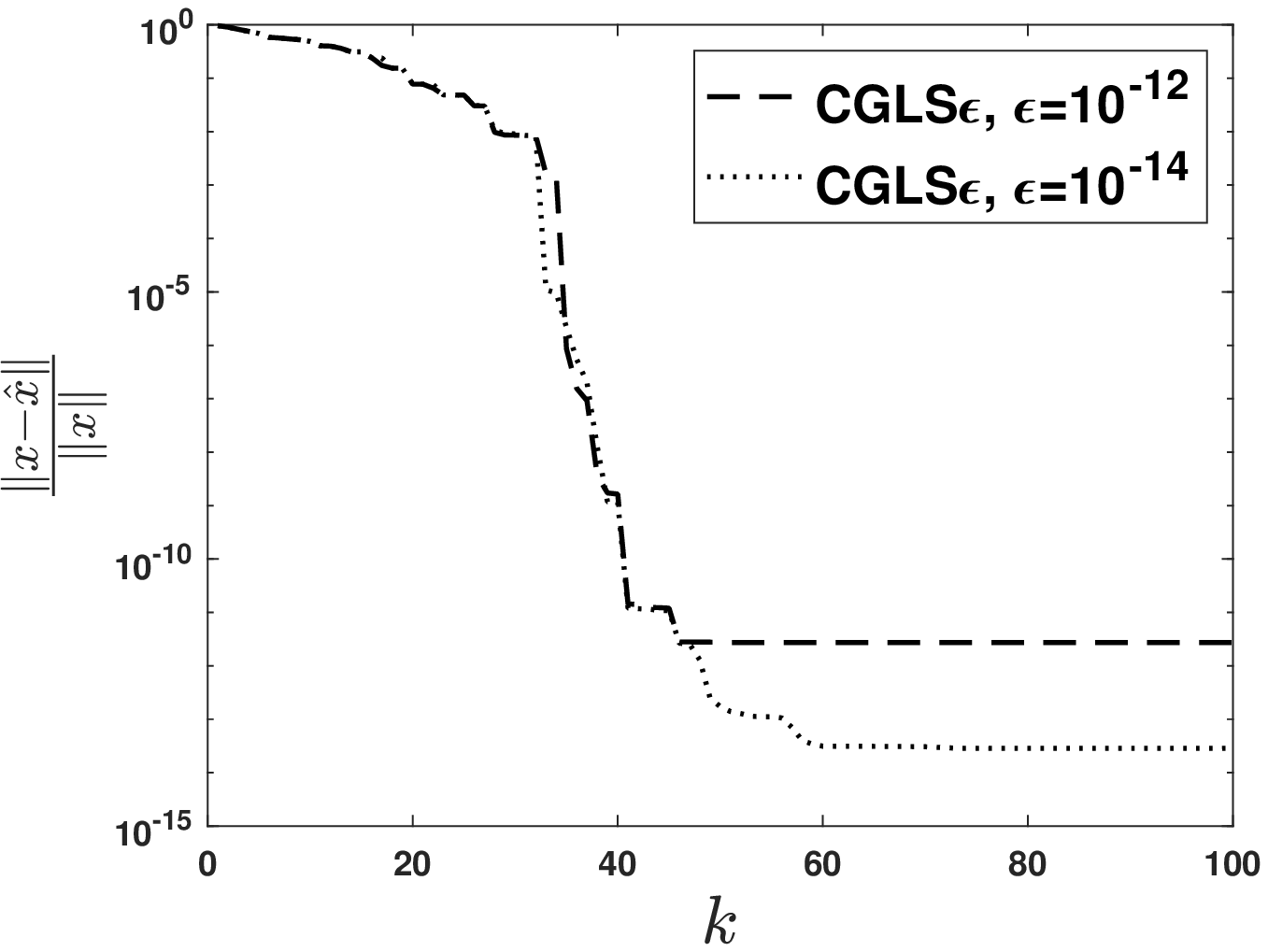}
		\includegraphics[width=0.49\linewidth,height=0.35\linewidth]{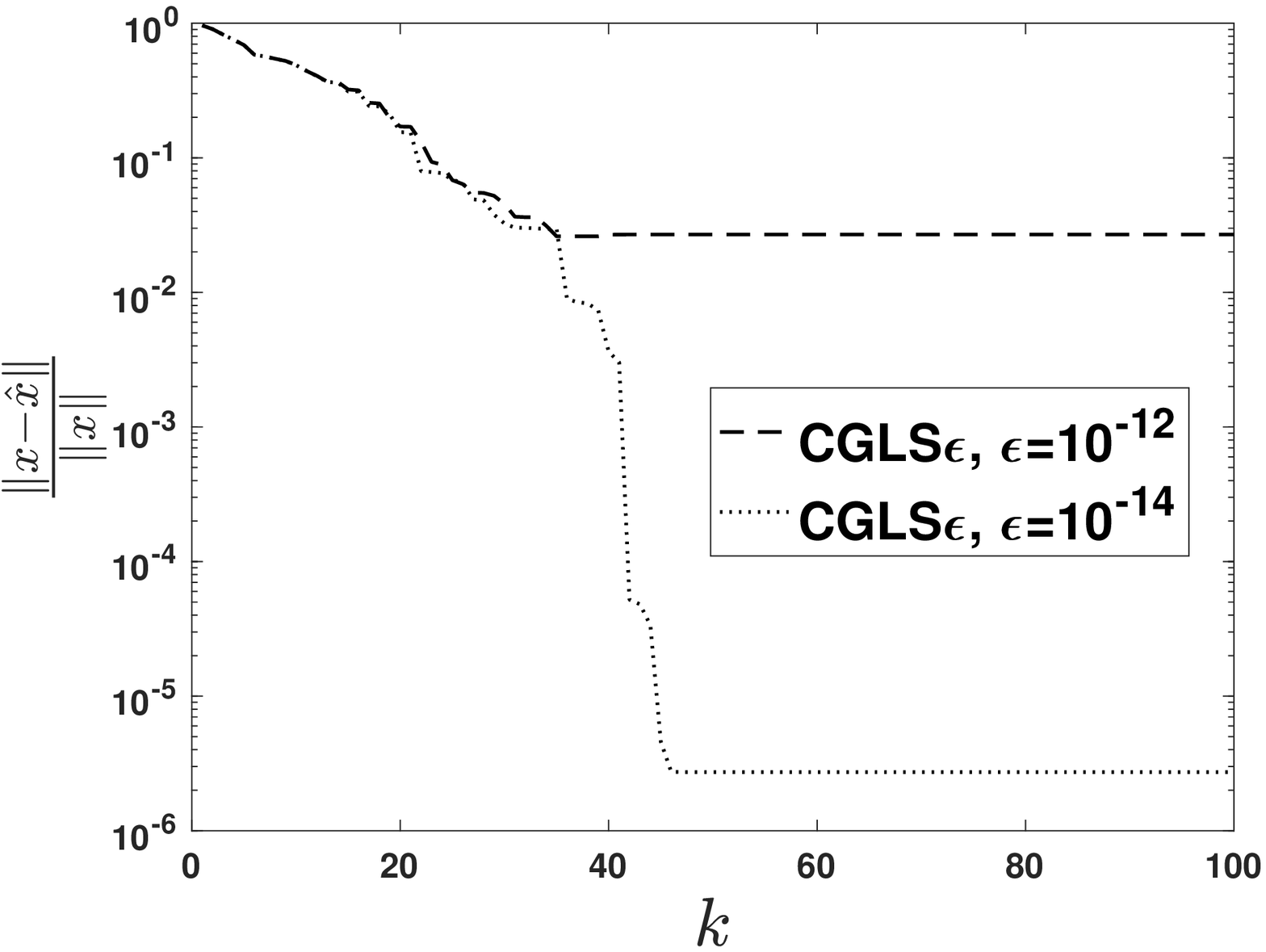}
	\caption{CGLS$\epsilon$. Relative error $\displaystyle \frac{\|x-\hat{x}\|}{\|x\|}$ between the exact solution $x$ and the computed solution $\hat{x}$ for $\epsilon\approx10^{-12}$ (dashed line) and $\epsilon\approx10^{-14}$ (dotted line) versus iteration index $k$. Left part: $c = 10^{-2}\,\sf{rand(n,1)}$, right part: $c = 10^{2} \,\sf{rand(n,1)}$.}
	\label{fig:test_eps}
\end{figure}

\begin{figure}
	\centering
	\includegraphics[scale=0.45]{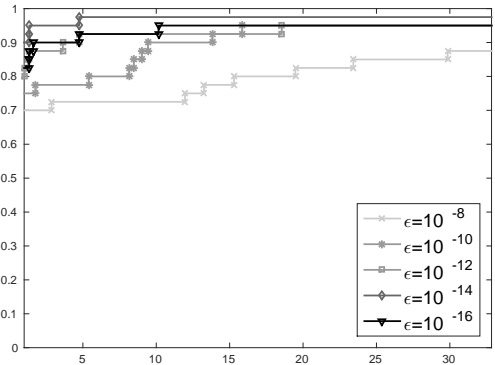}
	\caption{CGLS$\epsilon$. Performance profile of CGLS$\epsilon$ on the synthetic set of matrices $\mathcal{P}$ for different values of $\epsilon$.}
	\label{fig:eps2}
\end{figure}

\subsection{Comparing CGLS$\epsilon$ and CGLS$I$ with CG}\label{sec_comp}
In this section, we compare the performance of CGLS$\epsilon$ (with $\epsilon=2^{-47}\approx10^{-14}$) and CGLS$I$ to the reference CG method. We first show their increased performance on selected problems. 

\subsubsection{Solution accuracy}

We first consider the numerical experiment corresponding to choice C1 with $a=0.5$ and we choose $c=10^{-1}\,\sf{rand}(n,1)$. The corresponding matrix $A$ is such that $\kappa(A)=10^5$. The three iterative methods are compared in  \cref{fig:test1_err} left, where we report the relative error ${\|x-\hat{x}\|}/{\|x\|}$ versus the number of iterations $k$. The practical behaviour of the methods is found to be different.  CG achieves an accuracy of the order of $10^{-7}$, while the error in the solution computed by CGLS$I$ and CGLS$\epsilon$ is $\kappa(A)$ times lower. 

\begin{figure}
	\centering
	\includegraphics[width=0.49\linewidth,height=0.35\linewidth]{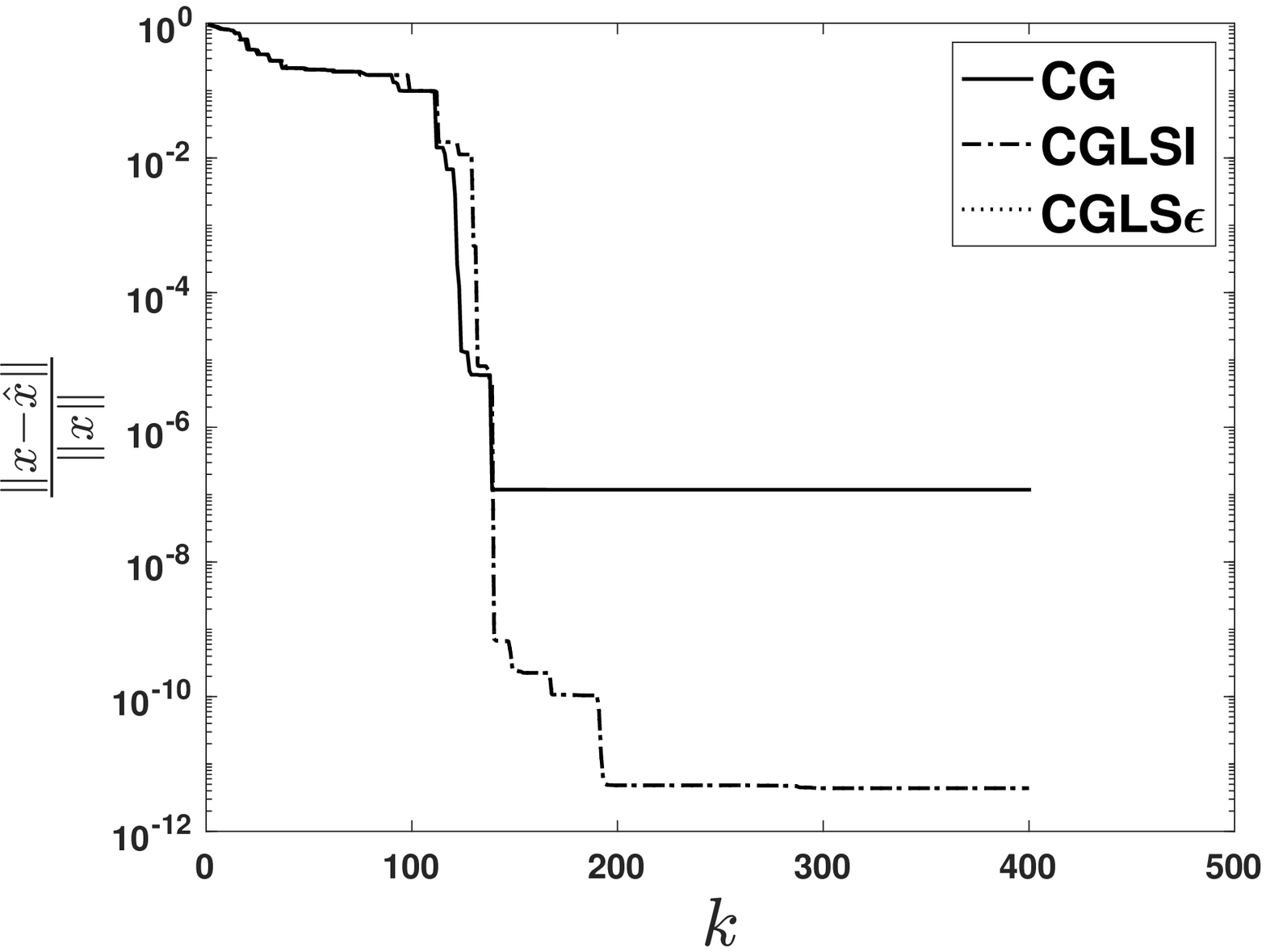}
	\includegraphics[width=0.49\linewidth,height=0.35\linewidth]{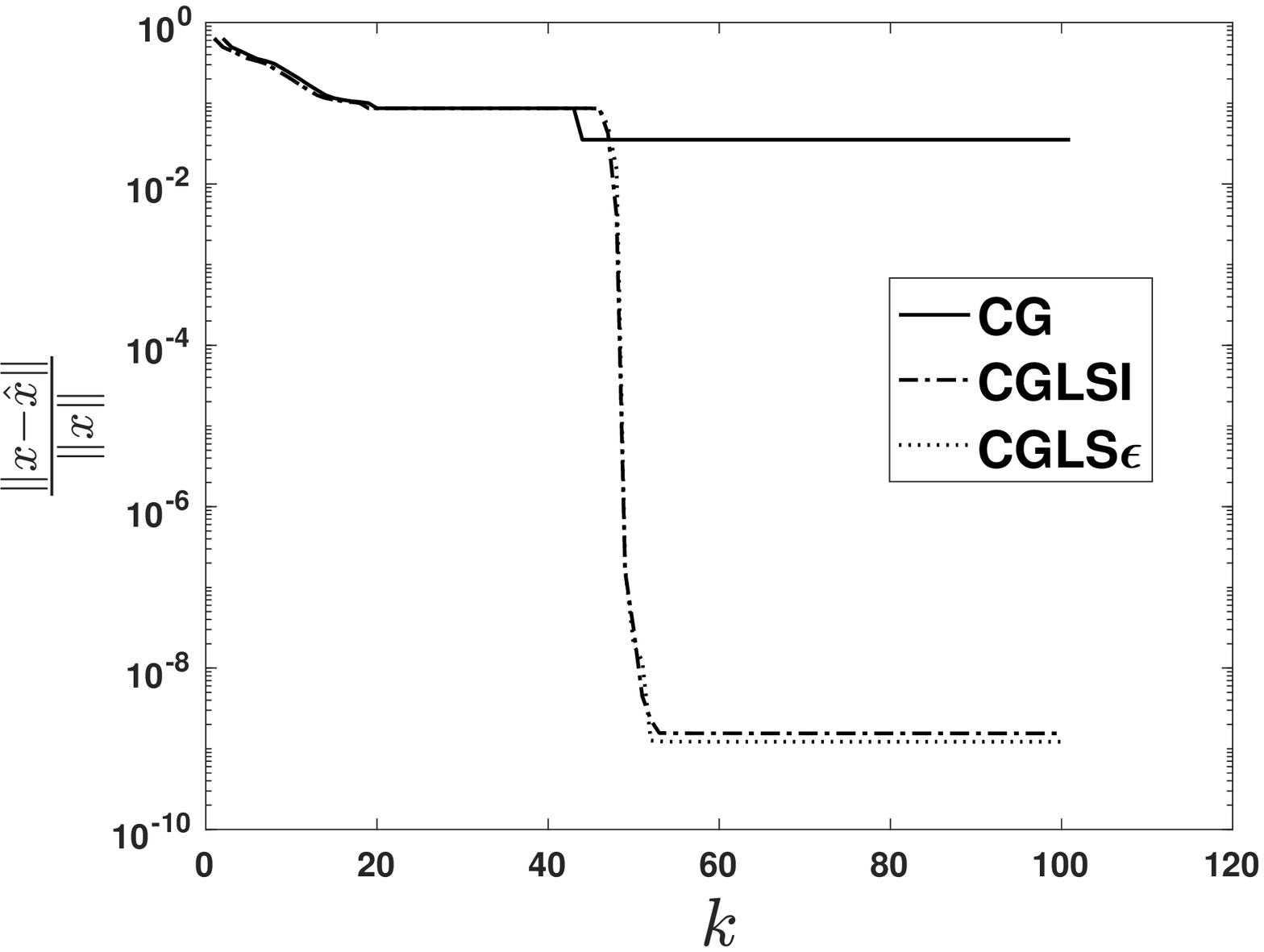}
	\caption{Relative error $\displaystyle \frac{\|x-\hat{x}\|}{\|x\|}$ between the exact solution $x$ and the computed solution $\hat{x}$ for CG, CGLS$I$ and CGLS$\epsilon$, respectively. Left part:  $\kappa(A)=10^5$. Right part:  $\kappa(A)=5\,10^7.$}
	\label{fig:test1_err}
\end{figure}

We now consider a second numerical experiment based on the choice C2 with $\sf{up}=0.5$, $\sf{dw}=10^{-8}$, $c=10^{-14}\,\sf{rand}(n,1)$ respectively. It holds $\kappa(A)=5\,10^7$. The gap in the results is even larger, both CGLS$I$ and CGLS$\epsilon$ find an accurate solution, while CG does not manage to solve the problem at all, see  \cref{fig:test1_err} right. 

We finally compare the three iterative methods plus MINRES on the synthetic set of matrices $\mathcal{P}$, where $c$ is chosen as $\gamma+(\zeta-\gamma)\,\sf{rand}(n,1)$  for different values of $\gamma, \zeta$. We report in  \cref{fig:perf_iter} the performance profile corresponding to these simulations. Clearly, CGLS$I$ and CGLS$\epsilon$ perform much better than CG and MINRES. Their performance is quite similar but CGLS$I$ is found more robust on problems with right-hand sides of large Euclidean norm. In addition, CGLS$I$ offers the advantage to be parameter free. 

\begin{figure}
	\centering	
	\includegraphics[scale=0.5]{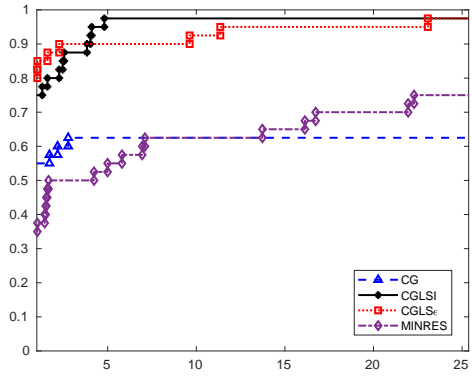}
	\caption{Performance profile of CG, CGLS$I$, CGLS$\epsilon$ and MINRES on the synthetic set of matrices $\mathcal{P}$.}
	\label{fig:perf_iter}
\end{figure}

\subsubsection{Forward error bounds}\label{sec_num_bounds}
In this section we numerically validate the  error estimates   \cref{cond}.

In  \cref{tab:tab} we validate these bounds for different problems of the form C1 and C2. We report the condition number of matrix $A$, the bound on the forward error coming from standard theory of linear systems and, for each method, the forward error and the estimates from \cref{cond}.  For all the tests, the proposed first order estimates provide an accurate upper bound for the forward error and in many tests they are much sharper than the standard bound. It is again evident from these tests that in general CGLS$I$ and CGLS$\epsilon$ perform better than CG. In particular, in many tests the initial error introduced while forming the right-hand side, see \cref{bound}, is larger than the error related to the solution of the linear system.

Notice also that using the standard results on least squares problems for CGLS$\epsilon$  would largely overestimate the error. For example, for the first test we would obtain:
\begin{equation*}
\frac{\|\hat{x}_\epsilon-x_\epsilon\|}{\|x_\epsilon\|}\leq u~\kappa(\Ae)\left(1+\frac{\|\Ae^\dagger\|\|b_\epsilon-\Ae x_\epsilon\|}{\|x_\epsilon\|}\right)\approx 3\, 10^{-8}.
\end{equation*}
This is evident also from  \cref{fig:eps1} right, where we remark that $\kappa^2(A_{\epsilon})$ alone is already much larger than the structured relative condition number of \cref{sistema_eps}.


	\begin{table}
		\centering
		\begin{tabular}{cclllllll}
			\hline
			Problem	& $\kappa(A)$& $\kappa(A)^2\bar{\eta}(\hat{x})$ & $E_{CG}$ & $\hat{E}_{CG}$ & $E_{CGLSI}$ & $\hat{E}_{CGLSI}$ & $E_{CGLS\epsilon}$ & $\hat{E}_{CGLS\epsilon}$ \\ 
			\hline
			$a=2$, $\alpha=10^{-10}$	&$5\,10^{5}$&$6\,10^{-5}$&$ 6\,10^{-7}$ & $10^{-5}$ &$2\,10^{-10}$ & $3\,10^{-6}$ &$2\,10^{-10}$ &$4\,10^{-7}$ \\ 
			$a=0.4$, $\alpha=10^{-12}$	&$10^{7}$&$10^{-1}$&$10^{-3}$  & $10^{-2}$ & $10^{-8}$ & $4\,10^{-8}$ & $7\,10^{-11}$ &$5\,10^{-8}$\\ 
			$a=0.7$, $\alpha=10^{-1}$ &$10^{3}$&$ 2\,10^{-10}$ & $2\,10^{-12}$ & $10^{-11}$ & $5\,10^{-15}$ &$2\,10^{-12}$ &$5\,10^{-15}$ &$3\,10^{-13}$\\ 
			$a=1.3$, $\alpha=10^{-4}$ &$10^{2}$&$ 5\,10^{-12}$ & $2\,10^{-13}$ & $10^{-12}$ & $2\,10^{-15}$ &$\,10^{-12}$ &$\,10^{-14}$ &$\,10^{-13}$\\ 
			$\sf{up}=10^{2}$, $\sf{dw}=10^{-4}$, $\alpha=10^{-4}$	&$10^{6}$&$2\,10^{-4}$  & $10^{-5}$ & $10^{-4}$ & $10^{-10}$ & $10^{-6}$ &$10^{-10}$&$10^{-7}$\\ 
			$\sf{up}=10^{-2}$, $\sf{dw}=10^{-6}$, $\alpha=10^{-5}$	&$\,10^{4}$&$8\,10^{-7}$  & $2\,10^{-9}$ & $7\,10^{-7}$ & $5\,10^{-9}$ &$7\,10^{-7}$  &$8\,10^{-10}$&$\,10^{-7}$\\
			$a=1.9$, $\alpha=-10^{-6}$&$2\,10^{5}$&$10^{-5}$  & $4\,10^{-8}$ & $10^{-6}$ & $3\,10^{-9}$ & $\,10^{-6}$ &$3\,10^{-9}$&$6\,10^{-7}$\\ 
			$\sf{up}=10^{3}$, $\sf{dw}=10^{-1}$, $\alpha=10^{2}$&$10^{4}$	&$ 4\, 10^{-5}$ & $10^{-9}$ & $10^{-6}$ & $6\,10^{-15}$ & $10^{-6}$& $10^{-11}$& $10^{-7}$\\ 
			$\sf{up}=10^{4}$, $\sf{dw}=10^{-3}$, $\alpha=-10^{-2}$ &$10^{7}$&$10^{-2}$  & $10^{-3}$ & $10^{-2}$ & $10^{-9}$ & $10^{-6}$ &$10^{-9}$ &$10^{-7}$\\
			$a=0.5$, $\alpha=1$ &$10^{5}$&$ 10^{-5}$ &$ 10^{-7}$ & $8\,10^{-6}$ & $5\,10^{-12}$ & $8\,10^{-10}$ &$5\,10^{-12}$ &$2\,10^{-10}$ \\ 
			\hline
		\end{tabular}
	\caption{Comparison of the computed forward errors and their estimates  for different synthetic test problems. We report the condition number $\kappa(A)$ of $A$, the standard bound $\kappa(A)^2\bar{\eta}(\hat{x})$, the quantities $E_{CG}$, $E_{CGLSI}$,  and $E_{CGLS\epsilon}$ which denote the computed forward errors $\displaystyle \frac{\|x-\hat{x}\|}{\|x\|}$ for  CG, CGLS$I$ and CGLS$\epsilon$, respectively and the corresponding estimates $\hat{E}_{CG}$, $\hat{E}_{CGLSI}$ and $\hat{E}_{CGLS\epsilon}$  from \cref{cond}. In the tests $c=\alpha\,\sf{rand}(n,1)$ for a chosen constant $\alpha$.}

	\label{tab:tab}
\end{table}


\begin{figure}
	\centering
	\includegraphics[scale=0.4]{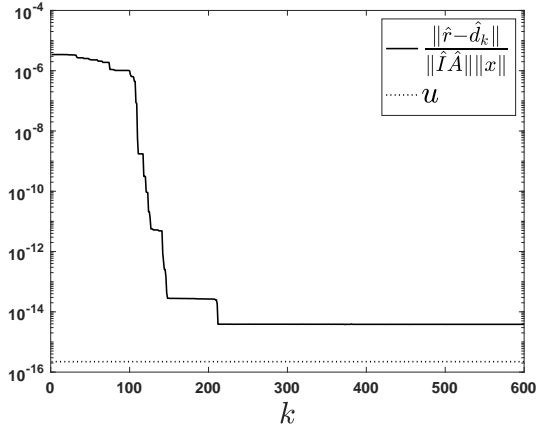}
	\caption{CGLS$I$.  $\displaystyle \frac{\|\hat{r}-\hat{d}_k\|}{\|\hat{I}\hat{A}\|\|x\|}$ with $\hat{r}=\hat{b}-\hat{I}\hat{A}x$ and $\hat{d}_k$ the recurred residual defined in  \cref{algo_i} versus iteration index $k$ and machine precision $u\approx10^{-16}$. }
	\label{fig:fig3}
\end{figure}

\subsubsection{Numerical study of the norm of the residual}
In this section we study the numerical behaviour of the Euclidean norm of the residuals. 

In  \cref{fig:fig3}, we consider for CGLS$I$ the quantity $\displaystyle \frac{\|\hat{r}-\hat{d}_k\|}{\|\hat{I}\hat{A}\|\|x\|}$ (with $\hat{r}=\hat{b}-\hat{I}\hat{A}x$ and $\hat{d}_k$ defined in  \cref{algo_i}) that appears in \cref{cond_bjork}. We can deduce that the bound \cref{bound_error} holds for $k$ large enough, as \cref{cond_bjork} is satisfied with $c_1=O(10)$, see \cref{remark_bjork}. This is also the case in many other simulations. 

\vskip 5pt
{\bf Conclusion:} The proposed methods show better performance that standard CG, both in terms of accuracy and of rate of convergence. The error bounds proposed, based on structured condition numbers of the problems, are able to better predict forward errors than classical bounds. The relative difference between the actual and recurred residuals in CGLS$I$ is of the order of the machine precision.

\subsection{Final comparison with direct methods}\label{sec_direct}
Motivated by the applications introduced in the Introduction, we have mainly focused on the design of iterative methods. However, it is natural to consider also direct methods for the solution of problem \cref{base}. It is then interesting to compare the performance of our methods to direct approaches known to be backward stable. 

A first possibility is to solve directly \cref{base}.  We consider the method, later denoted by QR, that expresses $A^\top A$ as $R^\top R$, where the upper triangular factor $R$ is obtained from the $QR$ factorization of $A$. 

Alternatively, we can consider \cref{sistema_eps} and use a method that computes the QR factorization of $\Ae$. This can be computed directly or  obtained as a modification of the QR factorization of $A$, to take into account the addition of the last line, as described in  \cite[Section 3.2]{bjorck1996numerical}. For the QR factorization of $\Ae$ column pivoting is used for improved stability, as suggested in \cite[Chap.22]{laha:74} for the solution of linear least squares problems with linear equality constraints by weighting. No evident difference was observed in the numerical tests between the two methods, so just results obtained by the  QR factorization of $\Ae$ are reported in the following. The method will be labelled as $QR_\epsilon$.

Another meaningful possibility is to solve the normal equations \cref{syst_eps}, computing the inverse of $A^\top A+\epsilon^2 c c^\top$ with the Sherman-Morrison formula \cite[\S 3.1]{bjorck1996numerical}:
\begin{equation*}
(A^\top A+\epsilon^2cc^\top)^{-1}=(A^\top A)^{-1}-\alpha ww^\top, \quad w=(A^\top A)^{-1}c,\quad \alpha=\frac{\epsilon^2}{1+\epsilon^2c^\top w}.
\end{equation*} 
In this case, we can express the solution $x$ as:
\begin{equation*}
x=(I_n-\alpha wc^\top)(x^\dagger+w), \quad x^\dagger=A^\dagger b.
\end{equation*}
We label this method as SM. 

Finally, we can consider the reformulation of \cref{base} as the augmented system \cref{augmented} and use AUG that solves the system with a $LDL^\top$ factorization, after having applied the optimal scaling described in \cite{bjor92}. All the considered methods are summarized in \cref{tab:tab_methods}.

	\begin{table}
		\centering
		\caption{Summary of all the considered methods (both direct and iterative). We report the label used for the method, the related formulation of the problem the method is applied to and a brief description of each method. See \cref{sec_methods} for additional details on iterative methods.}
		\begin{tabular}{l|l|l}
			\hline
			Label &  Formulation & Description\\
			\hline
			\rule[0mm]{0mm}{4mm} QR & $A^\top Ax=A^\top b+c$  &  QR factorization of $A$\\
			\rule[0mm]{0mm}{4mm} QR$_\epsilon$  &$\min_{x}\|A_\epsilon x-b_\epsilon\|^2$& QR factorization of matrix $\Ae$\\
			\rule[0mm]{0mm}{4mm} SM& $(A^\top A+\epsilon^2 cc^\top)x=A^\top b+c$ & \small{Inverse  with Sherman-Morrison formula}\\
			\rule[-4mm]{0mm}{0mm} AUG &  $\begin{bmatrix}
			I_m & A\\
			A^\top& 0
			\end{bmatrix}
			\begin{bmatrix}
			r\\x
			\end{bmatrix}=\begin{bmatrix}
			b\\-c
			\end{bmatrix}$ & $LDL^\top$ factorization after scaling\\
			\hline
			\rule[0mm]{0mm}{4mm} CG & $A^\top Ax=A^\top b+c$ & Conjugate Gradient\\
			\rule[0mm]{0mm}{4mm} CGLS$I$ & $\hat{A}^\top\hat{I}\hat{A}x=\hat{A}^\top\hat{b}$ & Modified Conjugate Gradient\\
			\rule[0mm]{0mm}{4mm} CGLS$\epsilon$ & $\min_{x}\|A_\epsilon x-b_\epsilon\|^2$ & Conjugate Gradient for Least Squares\\
			\rule[-4mm]{0mm}{0mm} MINRES &  $\begin{bmatrix}
			I_m & A\\
			A^\top& 0
			\end{bmatrix}
			\begin{bmatrix}
			r\\x
			\end{bmatrix}=\begin{bmatrix}
			b\\-c
			\end{bmatrix}$ & Minimum Residual method
			\\
			\hline
		\end{tabular}	
		\label{tab:tab_methods}
	\end{table}

\begin{figure}
	\centering

	\includegraphics[scale=0.5]{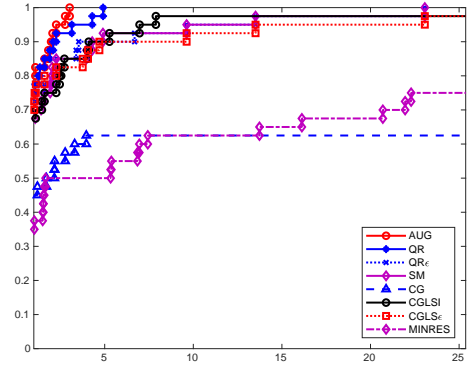}
	\caption{Performance profile on the synthetic set of matrices 
		$\mathcal{P}$  considering both direct and iterative methods. 
		 }
	\label{fig:perf_tutti}
\end{figure}
\vspace{5pt}
In  \cref{fig:perf_tutti} we compare all these direct methods with the iterative methods on the same set of matrices used for the performance profile in  \cref{fig:perf_iter}. It is clear that CG and MINRES show a much worse performance than the other solvers. On the other hand, even if AUG and QR remain slightly more robust and more efficient,  the performance of the proposed iterative methods is close to that of direct backward stable methods. 


\vskip 5pt 

{\bf Conclusion:} The overall performance of the proposed iterative methods is close to that of standard direct backward stable methods.

\section{Conclusions} \label{sec:conc}
We have considered both theoretical and practical aspects related to the solution of linear systems of the form \cref{base}. First, we have studied the structured condition number of the system and proposed a related explicit formula for its computation. Then, we have considered the numerical solution of \cref{base}. We have remarked that the same issues that deteriorate the performance of the Conjugate Gradient method on the normal equations also arise in this setting. This has guided us in the development of robust iterative methods for the numerical solution of \cref{base}. The new methods that have been proposed have been validated numerically. 
\vskip 5pt
From the numerical experiments, we can draw the following conclusions: 
\begin{itemize}
	\item The choice of the $\epsilon$ parameter in CGLS$\epsilon$ deeply affects the performance of the method. A small value (approximately of order $10^{-14}$) is generally found a relevant choice, but a special attention is required in case of right-hand sides with large Euclidean norm.
	\item The proposed iterative methods are shown to be more stable than CG on the set of test problems considered. 
	\item The first order estimates of the forward error obtained by the relative structured condition number are sharper estimates of the forward error, compared to standard bounds issued from linear system theory.
	\item The performance of the proposed methods, in terms of solution accuracy, is comparable to that of stable direct methods.
	\item For the considered class of linear systems, the best iterative solver appears to be CGLS$I$, that, compared to CGLS$\epsilon$, offers the advantage of being parameter free and more robust when solving problems exhibiting right-hand sides with large Euclidean norm. 
\end{itemize}   

\bibliographystyle{siamplain}
\bibliography{article_simax}

\begin{thebibliography}{10}

\bibitem{arioli}
{\sc M.~Arioli, M.~Baboulin, and S.~Gratton}, {\em A partial condition number
  for linear least squares problems}, SIAM J. Matrix Anal. Appl., 29 (2007),
  pp.~413--433, \url{https://doi.org/10.1137/050643088},
  \url{https://doi.org/10.1137/050643088},
  \url{https://arxiv.org/abs/https://doi.org/10.1137/050643088}.

\bibitem{bjor92}
{\sc A.~Bj{\"o}rck}, {\em Pivoting and stability in the augmented system
  method}, in Numerical Analysis \textup{1991}, Proceedings of the
  \textup{14}th Dundee Conference, D.~F. Griffiths and G.~A. Watson, eds.,
  vol.~260 of Pitman Research Notes in Mathematics, Longman Scientific and
  Technical, Essex, UK, 1992, pp.~1--16.

\bibitem{bjorck1996numerical}
{\sc A.~Bj\"orck}, {\em Numerical Methods for Least Squares Problems}, vol.~51,
  SIAM, Philadelphia, 1996.

\bibitem{bjorck1998stability}
{\sc A.~Bj{\"o}rck, T.~Elfving, and Z.~Strakos}, {\em Stability of conjugate
  gradient and {L}anczos methods for linear least squares problems}, SIAM J.
  Matrix Anal. Appl., 19 (1998), pp.~720--736,
  \url{https://doi.org/10.1137/S089547989631202X},
  \url{http://dx.doi.org/10.1137/S089547989631202X}.

\bibitem{high_order}
{\sc H.~{Calandra}, S.~{Gratton}, E.~{Riccietti}, and X.~{Vasseur}}, {\em {On
  high-order multilevel optimization strategies}}, arXiv e-prints,  (2019),
  arXiv:1904.04692, p.~arXiv:1904.04692,
  \url{https://arxiv.org/abs/1904.04692}.

\bibitem{multilevel_ann}
{\sc H.~{Calandra}, S.~{Gratton}, E.~{Riccietti}, and X.~{Vasseur}}, {\em {On
  the approximation of the solution of partial differential equations by
  artificial neural networks trained by a multilevel Levenberg-Marquardt
  method}}, arXiv e-prints,  (2019), arXiv:1904.04685, p.~arXiv:1904.04685,
  \url{https://arxiv.org/abs/1904.04685}.

\bibitem{chang}
{\sc X.~W. Chang and D.~Titley-Peloquin}, {\em Backward perturbation analysis
  for scaled total least-squares problems}, Numer. Linear Algebra Appl., 16
  (2009), pp.~627--648, \url{https://doi.org/10.1002/nla.640},
  \url{https://onlinelibrary.wiley.com/doi/abs/10.1002/nla.640},
  \url{https://arxiv.org/abs/https://onlinelibrary.wiley.com/doi/pdf/10.1002/nla.640}.

\bibitem{chen2012structured}
{\sc X.~S. Chen, W.~Li, X.~Chen, and J.~Liu}, {\em Structured backward errors
  for generalized saddle point systems}, Linear Algebra Appl., 436 (2012),
  pp.~3109--3119,
  \url{https://doi.org/https://doi.org/10.1016/j.laa.2011.10.012},
  \url{http://www.sciencedirect.com/science/article/pii/S0024379511007063}.

\bibitem{cox1999}
{\sc A.~J. Cox and N.~J. Higham}, {\em Backward error bounds for constrained
  least squares problems}, BIT Numerical Mathematics, 39 (1999), pp.~210--227,
  \url{https://doi.org/10.1023/A:1022385611904},
  \url{https://doi.org/10.1023/A:1022385611904}.

\bibitem{perf_prof}
{\sc E.~D. Dolan and J.~J. Mor{\'e}}, {\em Benchmarking optimization software
  with performance profiles}, Mathematical Programming, 91 (2002),
  pp.~201--213, \url{https://doi.org/10.1007/s101070100263},
  \url{https://doi.org/10.1007/s101070100263}.

\bibitem{Duff}
{\sc I.~S. Duff, N.~I.~M. Gould, J.~K. Reid, J.~A. Scott, and K.~Turner}, {\em
  Factorization of sparse symmetric indefinite matrices}, Technical Report
  RAL-90-066,  (1990).

\bibitem{orban}
{\sc R.~Estrin, D.~Orban, and M.~A. Saunders}, {\em {LNLQ}: An iterative method
  for least-norm problems with an error minimization property}, in Cahier du
  GERAD No. G-2018-40, Montr\'eal, Canada, 2018,
  \url{http://web.stanford.edu/~restrin/files/eos2018.pdf}.

\bibitem{fletcher1973}
{\sc R.~Fletcher}, {\em A class of methods for nonlinear programming: {III}.
  {R}ates of convergence}, Numerical Methods for Nonlinear Optimization,
  (1973).

\bibitem{Fraysse2000}
{\sc V.~Frayss{\'e}, S.~Gratton, and V.~Toumazou}, {\em {Structured backward
  error and condition number for linear systems of the type $A^{*} Ax = b$}},
  BIT Numerical Mathematics, 40 (2000), pp.~74--83,
  \url{https://doi.org/10.1023/A:1022366318322},
  \url{https://doi.org/10.1023/A:1022366318322}.

\bibitem{gill1993solving}
{\sc P.~E. Gill, W.~Murray, D.~B. Poncele{\'o}n, and M.~A. Saunders}, {\em
  Solving reduced {KKT} systems in barrier methods for linear programming}, in
  Numerical Analysis \textup{1993}, Proceedings of the \textup{15}th Dundee
  Conference, D.~F. Griffiths and G.~A. Watson, eds., vol.~303 of Pitman
  Research Notes in Mathematics, Longman Scientific and Technical, Essex, UK,
  1994, pp.~89--104.

\bibitem{vanloan}
{\sc G.~H. Golub and C.~F. Van~Loan}, {\em Matrix Computations}, The Johns
  Hopkins University Press, USA, 2012.
\newblock Fourth Edition.

\bibitem{gratton1996condition}
{\sc S.~Gratton}, {\em On the condition number of linear least squares problems
  in a weighted {F}robenius norm}, BIT Numerical Mathematics, 36 (1996),
  pp.~523--530, \url{https://doi.org/10.1007/BF01731931},
  \url{https://doi.org/10.1007/BF01731931}.

\bibitem{greenbaum}
{\sc A.~Greenbaum}, {\em Estimating the attainable accuracy of recursively
  computed residual methods}, SIAM J. Matrix Anal. Appl., 18 (1997),
  pp.~535--551, \url{https://doi.org/10.1137/S0895479895284944},
  \url{https://doi.org/10.1137/S0895479895284944},
  \url{https://arxiv.org/abs/https://doi.org/10.1137/S0895479895284944}.

\bibitem{higham2002accuracy}
{\sc N.~J. Higham}, {\em Accuracy and Stability of Numerical Algorithms},
  vol.~80, SIAM, Philadelphia, 2002.

\bibitem{horn2012matrix}
{\sc R.~A. Horn and C.~R. Johnson}, {\em Matrix Analysis}, Cambridge University
  Press, 2012.

\bibitem{laha:74}
{\sc C.~L. Lawson and R.~J. Hanson}, {\em Solving Least Squares Problems},
  Prentice-Hall, Inc., Englewood Cliffs, New Jersey, USA, 1974.
\newblock Fourth Edition.

\bibitem{liu}
{\sc X.~G. Liu and N.~Zhao}, {\em Linearization estimates of the backward
  errors for least squares problems}, Numer. Linear Algebra Appl., 19 (2012),
  pp.~954--969, \url{https://doi.org/10.1002/nla.827},
  \url{https://onlinelibrary.wiley.com/doi/abs/10.1002/nla.827},
  \url{https://arxiv.org/abs/https://onlinelibrary.wiley.com/doi/pdf/10.1002/nla.827}.

\bibitem{nw}
{\sc J.~Nocedal and S.~Wright}, {\em Numerical Optimization}, Springer-Verlag
  New York, 2006, \url{https://doi.org/10.1007/978-0-387-40065-5}.
\newblock Second Edition.

\bibitem{paige1982lsqr}
{\sc C.~C. Paige and M.~A. Saunders}, {\em {LSQR}: An algorithm for sparse
  linear equations and sparse least squares}, ACM Trans. Math. Softw., 8
  (1982), pp.~43--71, \url{https://doi.org/10.1145/355984.355989},
  \url{http://doi.acm.org/10.1145/355984.355989}.

\bibitem{rice}
{\sc J.~Rice}, {\em A theory of condition}, SIAM J. Numer. Anal., 3 (1966),
  pp.~287--310, \url{https://doi.org/10.1137/0703023},
  \url{https://doi.org/10.1137/0703023},
  \url{https://arxiv.org/abs/https://doi.org/10.1137/0703023}.

\bibitem{sun1999structured}
{\sc J.~G. Sun}, {\em {Structured backward errors for {KKT} systems}}, Linear
  Algebra Appl., 288 (1999), pp.~75--88,
  \url{https://doi.org/https://doi.org/10.1016/S0024-3795(98)10184-2},
  \url{http://www.sciencedirect.com/science/article/pii/S0024379598101842}.

\bibitem{walden}
{\sc B.~Walden, R.~Karlson, and J.~G. Sun}, {\em Optimal backward perturbation
  bounds for the linear least squares problem}, Numer. Linear Algebra Appl., 2
  (1995), pp.~271--286, \url{https://doi.org/10.1002/nla.1680020308},
  \url{https://onlinelibrary.wiley.com/doi/abs/10.1002/nla.1680020308},
  \url{https://arxiv.org/abs/https://onlinelibrary.wiley.com/doi/pdf/10.1002/nla.1680020308}.

\end{thebibliography}


\end{document}